\newtheoremstyle{plainsl}%
        {\topsep}
        {\topsep}
        {\slshape} % only non-default setting
        {}
        {\normalfont\bfseries}
        {.}
        { }
        {}
\theoremstyle{plainsl}
\newtheorem{theorem}{Theorem}[section]
\newtheorem{proposition}[theorem]{Proposition}
\newtheorem{lemma}[theorem]{Lemma}
\newtheorem{corollary}[theorem]{Corollary}
\newtheorem{example}[theorem]{Example}
\newtheorem{remark}[theorem]{Remark}
\newcommand\tref[1]{Theorem~\ref{thm:#1}}
\newcommand\cref[1]{Corollary~\ref{cor:#1}}
\newcommand\sref[1]{Section~\ref{sec:#1}}
\newcommand\pref[1]{Proposition~\ref{pro:#1}}
\newcommand\sqr[2]{{\vbox{\hrule height.#2pt
    \hbox{\vrule width.#2pt height#1pt \kern#1pt
        \vrule width.#2pt}\hrule height.#2pt}}}
\renewcommand\qed{%
        \ifmmode\eqno\sqr53
        \else\nolinebreak\ \hfill\sqr53\medbreak\fi}
\numberwithin{equation}{section}
\newcommand{\m}{{{24}}}  % Special for this paper, subscripts and superscripts {24}
\newcommand{\ip}[1]{\langle {#1}\rangle}
\newcommand{\re}{{\mathbb R}}
\newcommand{\cx}{{\mathbb C}}
\newcommand{\ints}{{\mathbb Z}}
\renewcommand{\i}{{\rm i}}
\newcommand{\BMA}{{\mathbb A}}
\newcommand{\E}{{\mathsf E}}
\newcommand{\cE}{{\mathcal E}}  % Euclidean space
\newcommand{\F}{{\mathcal F}}
\newcommand{\G}{{\mathcal G}}
\newcommand{\I}{{\mathsf I}}
\newcommand{\J}{{\mathsf J}}
\renewcommand{\L}{\boldsymbol{\wedge}}  % Leech lattice in \sf font
\newcommand{\Z}{{\mathsf Z}}
\newcommand{\cI}{{\mathcal I}}
\newcommand{\R}{{\mathcal R}}
\renewcommand{\S}{{\mathsf S}}
\newcommand{\cS}{{\mathcal S}}
\newcommand{\cZ}{{\mathcal Z}}
\newcommand{\Nm}{{\sf Nm}}
\newcommand{\one}{{\mathbf 1}}
\DeclareMathOperator\Rad{Rad}
\DeclareMathOperator\rank{rank}
\DeclareMathOperator\spn{span}
\DeclareMathOperator\colsp{colsp}
\DeclareMathOperator\Jac{Jac}
\DeclareMathOperator\mult{mult}
\begin{document}
\thispagestyle{empty}
\setcounter{page}{1}
\title{On the ideal of the shortest vectors in the Leech lattice and other lattices}
\author{William J. Martin  \\
Corre L.~Steele \\
Department of Mathematical Sciences \\
Worcester Polytechnic Institute \\
Worcester, Massachusetts \\
{\tt \{martin,clsteele\}@wpi.edu}}

\date{August 2, 2014}
\maketitle

\medskip

\begin{abstract}
Let $X \subset \re^m$ be a spherical code (i.e., a finite subset of the unit sphere) and consider  
the ideal of all polynomials in $m$ variables which vanish on $X$. Motivated by a study of cometric ($Q$-polynomial)
association schemes and spherical designs, we wish to determine certain properties of this ideal. After presenting
some background material and preliminary results, we consider the case where $X$ is the set of shortest vectors
of one of the exceptional lattices $E_6$, $E_7$, $E_8$, $\Lambda_\m$ (the Leech lattice) and determine for each:
(i) the smallest degree of a non-trivial polynomial in the ideal, and (ii) the smallest $k$ for which the ideal admits
a generating set of polynomials all of degree $k$ or less. As it turns out, in all four cases mentioned above, these
two values coincide, as they also do for the icosahedron, our introductory example. The paper concludes with a
discussion of these two parameters, two open problems regarding their equality, and a
few remarks concerning connections to cometric association schemes.
\end{abstract}

%%%%%%%%%%%%%%%%%%%%%%%%%%%%%%%%%%%%%%%%%%%%%%%%%%%%%%%%%
\section{Introduction}

The Leech lattice $\Lambda_\m$ is well-studied in several mathematical contexts (see \cite[Section~4.11]{splag} and the references therein). 
This remarkable unimodular lattice in $\re^\m$ plays a fundamental role in the study of exceptional finite simple 
groups, gives an optimal sphere packing,  and its shortest vectors encode an optimal kissing configuration in 
$\re^\m$. There is an intriguing connection to modular forms and number theory  as well.
Among other examples, we consider here the 196560 shortest vectors of $\Lambda_\m$ and describe the ideal of polynomials
in 24 variables with real (or complex) coefficients which vanish on these points. 
We find that, while every polynomial in this ideal of total degree less than six is divisible by the equation of the 
sphere containing these 196560 points, the polynomials of degree six in the ideal generate the full ideal.

After looking at an instructive example and introducing the basic machinery we will need to study the shortest 
vectors of the Leech lattice, we first apply the techniques to three smaller examples of spherical codes coming 
from lattices. It is well-known that the shortest vectors of the $E_8$ lattice give us a spherical code of size 
$240$ (and again an optimal kissing configuration) in $\re^8$ and  the shortest
vectors of lattices $E_7$ and $E_6$ may be obtained from this configuration by intersecting it with  affine subspaces of 
codimension one and two, respectively \cite[Section~4.8]{splag}. In each case, we determine ``nice'' generating 
sets for the ideals of these configurations and find that the smallest degree of a non-trivial polynomial (polynomial multiples 
of the equation of the sphere itself being viewed as ``trivial'') is equal to the maximum degree of a polynomial
in these nice generating sets. This prompts us to ask when these two parameters are equal. We explore this
question at the end of this paper where we prove some simple inequalities on these two parameters and 
tie this material in to the study of cometric association schemes. These connections suggest
a rich interplay between the theory of (representations of) association schemes and elementary % real
algebraic geometry.

Many of the computations performed for this project were executed using the {\sc maple} and 
{\sc singular} computer algebra systems.

%%%%%%%%%%%%%%%%%%%%%%%%%%%%%%%%%%%%%%%%%%%%%%%%%%%%%%%%%
\subsection{A simple example}
\label{sec:icos}

As a way of introducing some of the terminology and techniques involved, we begin by considering the icosahedron, 
which we view as a set $X$ of 12 unit vectors in $\re^3$, closed under multiplication by $-1$ and
having pairwise inner products $\pm1, \pm 1/\sqrt{5}$.  For $a\in X$, the zonal polynomial
$$ Z_{f,a}(Y_1,Y_2,Y_3) = \left( a \cdot Y - 1 \right)  \left( a \cdot Y - 1/\sqrt{5} \right)  \left( a \cdot Y + 1/\sqrt{5} \right)  \left( a \cdot Y + 1 \right), $$
(which can be written $Z_{f,a}(Y) = f( a \cdot Y)$ for $f(t) = (t^2-1)(t^2 - 1/5)$) clearly vanishes at all points of $X$. We would like an
efficient (or simple, at least) description of all polynomials in three variables which have every element of $X$ as a zero; this 
set of polynomials forms  an ideal in $\cx[Y_1,Y_2,Y_3]$ which is denoted by $\cI(X)$. As with any spherical code, the quadratic
$$ \Nm := \sum_i Y_i^2 - 1 $$
vanishes on the entire unit sphere, so vanishes on $X$. Any polynomial divisible by $\Nm$ has this property as well and all these polynomials
are considered {\em trivial} for our purposes.

The icosahedron is a well-known spherical $5$-design \cite{dgs}. So \pref{tdes} below implies that the ideal 
$\I = \cI(X)$ contains no non-trivial polynomial of degree two or less. 
 But since $X$ is an antipodal code (i.e., closed under multiplication by $-1$), we may modify 
the degree 4 zonal polynomial given above to obtain several degree 3 polynomials in the ideal. Let $a\in X$ and let $b\in \re^3$ be any
nonzero vector orthogonal to $a$. Then 
$$F(Y) = (b\cdot Y)( a\cdot Y - 1/\sqrt{5}) ( a\cdot Y + 1/\sqrt{5}) $$
belongs to $\I$ as $F( c ) = 0$ for all $c\in X$. (Either $a\cdot c = \pm 1/\sqrt{5}$ or $c=\pm a$ and  $c$ is orthogonal to $b$.)
For  fixed $a$, one may make two linearly independent choices for $b$, thereby obtaining a pair of polynomials whose common
zero set consists of the two planes $\{ c \mid a\cdot c =  \pm 1/\sqrt{5}  \}$ 
together with the line joining $a$ to $-a$.

One straightforwardly employs software such as {\sc maple} to verify that $\Nm$ together with 12 polynomials 
$F$ obtained as above (two choices of $b$ for
each antipodal pair $\{a,-a\}$) generate a radical ideal with exactly $X$ as its zero set. Thus we have determined that $\I = \cI(X)$ is 
generated by a  set of polynomials, each of degree at most three and, yet, the smallest degree of any nontrivial polynomial in $\I$ is
also three.

%%%%%%%%%%%%%%%%%%%%%%%%%%%%%%%%%%%%%%%%%%%%%%%%%%%%%%%%%
\section{Preparatory lemmas}
\label{sec:prep}

Consider the algebra of polynomials $\re[Y] = \re[Y_1,\ldots, Y_m]$ and, as necessary, its extension 
$\cx[Y_1,\ldots, Y_m]$.  For a fixed finite $X \subseteq \re^m$ of size $v$,  we have  the natural 
evaluation map from this algebra to the direct product $\re\times \re \times \cdots \times \re$ of $v$ copies of $\re$
\begin{equation}
\label{eqn:eps}
 \varepsilon : \re[Y] \rightarrow \re^v \qquad \text{ given by} \qquad \varepsilon: F\mapsto (F(a),F(b),F( c), \ldots  )
\end{equation}
where $X = \{ a,b,c, \ldots \}$ (cf. \cite[Prop.~2.6]{fulton}).  We are interested in the ideal of all polynomials which map to the zero vector under
$\varepsilon$; this is the kernel of the algebra homomorphism $\varepsilon$.  
Our goal is to find combinatorially meaningful generating sets for this ideal and use these descriptions in our study of association schemes.
(We defer our discussion of the connection between this homomorphism and cometric association schemes
to a follow-up paper \cite{billsidealpaper}.)  If we are content with just any generating set, our task seems trivial. For example, when $|X|=1$,
our ideal $\ker \varepsilon$ takes the form
$$ \langle Y_1 - a_1, \ldots, Y_m - a_m \rangle$$
where $X = \{ (a_1,\ldots, a_m) \}$ and for larger finite $X$, our ideal can be expressed as the intersection of ideals of this form. (In
Section 4.3 of \cite{iva}, compare Theorem 7, Theorem 15 and Proposition 16.)

We use the following notation for the standard operations of algebraic geometry. (See, e.g., \cite[Chapter~15]{df} for a basic introduction.)
For a set $X\subseteq \cx^m$, we let $\cI(X)$ denote the ideal of all polynomials in $\cx[Y_1,\ldots, Y_m]$ that vanish at each point in $X$.
And if $\cS$ is any set of polynomials in $\cx[Y_1,\ldots, Y_m]$, we denote by $\cZ(\cS)$ the zero set of $\cS$, the collection of all points
$a$ in $\cx^m$ which satisfy $F(a)=0$ for all $F\in \cS$. Note that, when $X$ is finite, we have $\cZ(\cI(X))=X$ and, by the Nullstellensatz
(see, e.g., \cite[p21]{fulton}, \cite[p173]{iva}),  $\cI(\cZ(\J))=\Rad(\J)$, where $\Rad(\J)$ denotes the {\em radical} of ideal $\J$,  the
ideal of all polynomials $F$ such that $F^n\in \J$ for some positive integer $n$.

\bigskip

Let $f(t)$ be a polynomial in the variable $t$ and let $a \in \re^m$. Then, with $a\cdot Y := a_1Y_1 + \cdots + a_m Y_m$, the {\em
zonal polynomial} determined by $f$ and based at $a$ is the polynomial $Z_{f,a}(Y) \in \re[Y_1,\ldots Y_m]$ given by
$$ Z_{f,a}(Y) = f( a\cdot Y). $$
For example, if $f(t) = \prod_{h=0}^d (t-\omega_h)$, then 
$$ Z_{f,a}(Y) = \left(  a_1Y_1 + \cdots + a_m Y_m - \omega_0 \right) \cdots  \left(  a_1Y_1 + \cdots + a_m Y_m - \omega_d \right)~.$$
Observe that, if  $f(t)$ is defined as in the previous sentence and  $\{ a\cdot b \mid b\in X \} \subseteq \{\omega_0,\ldots, \omega_d\}$,
then  $Z_{f,a} \in \cI(X)$. 

For a finite subset $X \subseteq \re^m$ with inner product set 
$$  \left\{ a\cdot b \mid a,b \in X \right\} = \{ \omega_0, \ldots, \omega_d\}$$
the {\em zonal ideal} of $X$ is the ideal 
$$  \Z_X  := \langle Z_{f,a}(Y) \mid a \in X \rangle $$
where $f(t)  = \prod_{h=0}^d (t-\omega_h)$. Clearly $\Z_X \subseteq \cI(X)$, but the two are not always equal. This occurs,
for example, when we take $X$ to consist of all but one or two of the vertices of the icosahedron. A more interesting 
example where equality fails is when $X$ is the 4-cube in $\re^4$ in which case $\cZ(\Z_X)$ contains all vertices of the 24-cell; this phenomenon 
is repeated with any other non-complete set of real mutually unbiased bases \cite{lmo}.

\begin{lemma} 
\label{lem:realzeros}
The following statements hold true for complex zeros of zonal polynomials:
\begin{itemize}
\item[(i)] Suppose $a$ is a nonzero vector in $\re^m$ and $\omega_0,\ldots,\omega_d$ are real numbers and consider the
polynomial $F \in \re[Y_1,\ldots, Y_m]$ defined by 
$$ F(Y) = (a\cdot Y - \omega_0) \cdots  (a\cdot Y - \omega_d) ~ .$$
If $z = r+\i s \in \cx^m$ is any zero of $F$ with $r,s \in \re^m$, then $a\cdot s =0$.  
\item[(ii)] If $\I$ is an ideal
generated by polynomials of this form, say $\I = \langle F_1, \ldots, F_t \rangle$ with 
$$ F_h(Y) = \prod_{j=0}^{d_h} \left( a^{(h)} \cdot Y - \omega_{h,j} \right) $$
with each $d_h \ge 0$, each $a^{(h)} \in \re^m$ and all $\omega_{h,j} \in \re$, then any common zero $z\in \cZ(\I)$,
$z = r+\i s$, has imaginary part $s$ orthogonal to all $a^{(h)}$ ($1\le h \le t$).
\item[(iii)] In particular, if the vectors $a^{(h)}$ in part {\sl (ii)} span
$\re^m$, then every zero of ideal $\I$ is real.
\end{itemize}
\end{lemma}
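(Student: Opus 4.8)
The plan is to prove the three parts in order, since (ii) contains (i) as the case $t=1$ and (iii) is an immediate corollary of (ii).

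For part (i), I would start from the observation that $F(z) = \prod_{j=0}^d (a\cdot z - \omega_j) = 0$ forces $a\cdot z = \omega_j$ for some $j$. Writing $z = r + \i s$ with $r,s$ real, we get $a\cdot z = a\cdot r + \i (a\cdot s)$, and since $\omega_j$ is real, comparing imaginary parts gives $a\cdot s = 0$ directly. So part (i) is essentially immediate; the only thing to be careful about is that the complex number $a\cdot z$ equals a \emph{real} number, which pins down its imaginary part as zero. I would write this out in a sentence or two.

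For part (ii), given a common zero $z = r + \i s \in \cZ(\I)$, I note that each generator $F_h$ vanishes at $z$ (since $F_h \in \I$ and $z$ is a common zero of $\I$, hence in particular a zero of each $F_h$). Now I apply part (i) with $a = a^{(h)}$ and $\omega_0,\ldots,\omega_d$ replaced by $\omega_{h,0},\ldots,\omega_{h,d_h}$: the conclusion is $a^{(h)}\cdot s = 0$. Since this holds for every $h$ with $1\le h\le t$, the imaginary part $s$ is orthogonal to all the $a^{(h)}$, as claimed. The one subtlety worth a remark is that membership in the ideal is not needed beyond the fact that $z$ zeros each generator — it is the generators, not arbitrary elements of $\I$, whose special product form we exploit.

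For part (iii), if the vectors $a^{(1)},\ldots,a^{(t)}$ span $\re^m$, then any vector orthogonal to all of them is orthogonal to all of $\re^m$, hence is the zero vector; so $s = 0$ and $z = r$ is real. I do not anticipate a genuine obstacle here — the lemma is elementary — but the part that most warrants care in the write-up is simply keeping the indexing straight between the ambient statement (the $a^{(h)}$ and double-indexed $\omega_{h,j}$) and the single-generator form invoked from part (i). I would phrase the reduction explicitly so the reader sees that (ii) is just (i) applied $t$ times.
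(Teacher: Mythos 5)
Your argument is correct and is essentially the paper's own proof: vanishing of a factored zonal polynomial forces $a^{(h)}\cdot z$ to equal one of the real $\omega_{h,j}$, so comparing imaginary parts gives $a^{(h)}\cdot s=0$, and (ii), (iii) follow by applying this to each generator (the paper phrases the conclusion as $s$ lying in the nullspace of the matrix with rows $a^{(h)}$). No issues to flag.
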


\begin{proof} We prove only part {\sl (ii)}. Write $z=r+\i s$ with $r,s\in \re^m$. If $F_h(z) = 0$, then there exists some $1\le j_h \le d_h$ with 
$a^{(h)} \cdot z = \omega_{h,j_h}$ so that $a^{(h)} \cdot r = \omega_{h,j_h}$ and $a^{(h)} \cdot s = 0$. 
Thus the real vector $s$ lies in the nullspace of the $t\times m$ matrix
whose rows are $a^{(h)}$, $1\le h \le t$. \qed
\end{proof}

\begin{corollary}
\label{cor:zonal}
Let $X$ be any spanning subset of $\re^m$. Then the zonal ideal $\Z_X$ of $X$ has only real zeros, as does
any ideal that contains it. \qed
\end{corollary}

%%%%%%%%%%%%%%%%%%%%%%%%%%%%%%%%%%%%%%%%%%%%%%%%%%%%%%%%%
\subsection{The antipodal case}
\label{sec:antip}

A spherical code $X$ is said to be {\em antipodal} \cite[Example~5.7]{dgs} if $-X=X$, i.e., for every $a\in X$, the point $-a$ belongs
to $X$ as well. Suppose $X$ is an antipodal spherical code in $\re^m$ with inner product set $\{ \omega_0, \ldots, \omega_d\}$ where, without loss of 
generality, $\omega_0=1$ and $\omega_d=-1$. Then as with our introductory example --- the icosahedron --- the ideal $\cI(X)$ 
not only contains all the zonal polynomials $Z_{f,a}(Y)$ for $a\in X$ and $f(t) = \prod (t-\omega_i)$, but also the ``sliced
zonal polynomials''
\begin{equation}
\label{eqn:sliced-def}
S_{f,a,b}(Y) :=  \left( a \cdot Y - \omega_1 \right) \cdots \left( a \cdot Y - \omega_{d-1} \right) (b\cdot Y) 
\end{equation}
where $a\in X$ and  $b$ is any nonzero vector in $\re^m$ orthogonal to $a$. Note that this polynomial has degree $d$, one less than the degree of 
the zonal polynomial. As one of our goals here is to find generators of small degree, we will typically prefer these sliced zonals over
the zonal polynomials. 

Let us now describe the ideal  $\S_X$ generated by all sliced zonal polynomials.
Suppose that $X$ is an antipodal spherical code in $\re^m$ with inner product set 
$$1=\omega_0 > \omega_1 > \cdots > \omega_{d-1} > \omega_d = -1$$ 
and, for each antipodal pair $\pm a$ choose a set of $m-1$ linearly independent vectors $b_{a,i}$ ($1\le i \le m-1$) 
all orthogonal to $a$. (E.g., we may take $\{a, b_{a,1}, \ldots, b_{a,m-1} \}$ to be an orthogonal basis for $\re^m$.) 
For each such pair $\pm a$, consider the set of polynomials
\begin{equation}
\label{eqn:Ba}
 B_a = \left\{  \left( b_{a,i} \cdot Y \right) \prod_{h=1}^{d-1} \left( a \cdot Y - \omega_h \right) \mid 1 \le i \le m-1 \right\} ~. 
 \end{equation}
The {\em ideal of sliced zonals} for this set $X$ is the ideal
$$ \S_X = \left\langle \bigcup_{a\in X} B_a \right\rangle $$
generated by the union of all the sets $B_a$ as $\pm a$ ranges over the antipodal pairs in $X$. (Note  that the ideal
generated by $B_a$, and hence $\S_X$ itself, is independent of the choice of the vectors $b_{a,i}$ as any other sliced zonal polynomial for this pair $\pm a$ is
a linear combination of those chosen.)

\begin{corollary}
\label{cor:slicedzonal}
Let $X$ be an antipodal spherical code in $\re^m$ such that $X$ spans $\re^m$. Assume $d\ge 3$. 
Then the ideal of sliced zonal polynomials $\S_X$ defined above has only real zeros, as does any ideal that contains it.
\end{corollary}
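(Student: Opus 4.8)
The plan is to compute the zero set $\cZ(\S_X)$ directly from the generators and then to split into cases according to whether the imaginary part of a given zero is orthogonal to all of $X$. Because $\cZ(\J) \subseteq \cZ(\S_X)$ holds for every ideal $\J \supseteq \S_X$, it suffices to show that each $z \in \cZ(\S_X)$ is real. The starting point is the observation that, by the form of the generators in \eqref{eqn:Ba}, a point $z \in \cx^m$ annihilates every element of $B_a$ precisely when either $a \cdot z \in \{\omega_1, \ldots, \omega_{d-1}\}$ or $b_{a,i} \cdot z = 0$ for all $i$; and since the $b_{a,i}$ ($1 \le i \le m-1$) are linearly independent and orthogonal to $a$, the linear map $z \mapsto (b_{a,1}\cdot z, \ldots, b_{a,m-1}\cdot z)$ on $\cx^m$ has a kernel of dimension one, which is therefore $\cx a$. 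Hence
\[
\cZ(\S_X) \;=\; \bigcap_{a \in X} \left( \, \bigcup_{h=1}^{d-1} \{\, z \in \cx^m \mid a \cdot z = \omega_h \,\} \;\cup\; \cx a \, \right).
\]

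Now fix $z = r + \i s \in \cZ(\S_X)$ with $r,s \in \re^m$. If $z$ lies on $\cx a$ for no $a \in X$, then $a \cdot z \in \{\omega_1,\ldots,\omega_{d-1}\} \subset \re$ for every $a \in X$, whence $a \cdot s = 0$ for all $a \in X$; as $X$ spans $\re^m$ this forces $s = 0$. If $z$ lies on $\cx a$ for some $a_0 \in X$, write $z = \lambda a_0$; should $\lambda = 0$, or should $z$ also lie on $\cx a_1$ for some $a_1 \neq \pm a_0$ (so that $a_0, a_1$ are independent), then $z = 0$. Otherwise $\lambda \neq 0$ and $z \notin \cx a$ for every $a \in X$ not parallel to $a_0$, so for each such $a$ the displayed description forces $\lambda(a \cdot a_0) = a \cdot z \in \{\omega_1,\ldots,\omega_{d-1}\} \subset \re$; thus, if there is even one $a \in X$ with $a \cdot a_0 \neq 0$, we conclude $\lambda \in \re$ and $z$ is real.

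What remains is the ``decomposable'' case, where $a \cdot a_0 = 0$ for every $a \in X \setminus \{\pm a_0\}$ --- that is, $X$ is the disjoint union of $\{\pm a_0\}$ with an antipodal code $X'$ contained in the hyperplane orthogonal to $a_0$ --- and I expect this to be the crux. Since $X$ spans $\re^m$, the code $X'$ must span that hyperplane, so it is nonempty; picking $a' \in X'$ and including $a_0$ among the vectors $b_{a',i}$ for the pair $\pm a'$ (legitimate, as $a_0 \perp a'$), the corresponding generator of $B_{a'}$ evaluated at $z = \lambda a_0$ collapses to $\lambda \prod_{h=1}^{d-1}(-\omega_h) = 0$, forcing $0$ to be one of $\omega_1, \ldots, \omega_{d-1}$; once this is known one checks that in fact the entire line $\cx a_0$ lies in $\cZ(\S_X)$. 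To finish, then, one must invoke $d \ge 3$ to show that this decomposable configuration cannot occur --- comparing the interior inner products of $X$ with those of $X'$ --- and I anticipate that verifying $d \ge 3$ genuinely rules out such splittings rather than merely their smallest instances (failing which the hypothesis should be strengthened to forbid $X$ from splitting off an antipodal pair orthogonal to all other code points, a condition the irreducible codes $E_6, E_7, E_8, \Lambda_\m$ of this paper satisfy automatically) is where the real work lies. The remaining cases above are a routine adaptation of \lref{realzeros} and \cref{zonal}.
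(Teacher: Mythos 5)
Your description of $\cZ(\S_X)$ and the ensuing case analysis are correct (and sharper than the paper's own proof), and the question you leave open at the end is exactly the right one --- but its answer is negative: the hypothesis $d\ge 3$ does \emph{not} exclude your ``decomposable'' case, so the corollary as stated is false and no amount of comparing inner products will finish your proof. Concretely, take $m=3$ and let $X$ consist of $\pm e_3$ together with the regular hexagon $\pm(1,0,0)$, $\pm(\tfrac12,\tfrac{\sqrt3}{2},0)$, $\pm(-\tfrac12,\tfrac{\sqrt3}{2},0)$ in the plane $Y_3=0$. This is an antipodal spanning spherical code with inner product set $\{1,\tfrac12,0,-\tfrac12,-1\}$, so $d=4$; yet the non-real point $z=\i e_3$ (indeed the whole line $\cx e_3$) lies in $\cZ(\S_X)$: every generator based at $\pm e_3$ carries a linear factor $b\cdot Y$ with $b\perp e_3$, which vanishes at $z$, while every generator based at a hexagon vertex $a$ carries the factor $a\cdot Y-\omega_2$ with $\omega_2=0$, which vanishes at $z$ because $a\perp e_3$. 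This pinpoints where the paper's argument goes wrong, at precisely the step you were uneasy about: from $s\neq 0$ one may conclude only that every element of $X$ is parallel or orthogonal to the single vector $a_0$ with $s\parallel a_0$, not that \emph{every pair} of elements of $X$ is parallel or orthogonal, and the former is compatible with arbitrarily large $d$. (The Remark following the corollary, asserting the orthoplex is the only failure, is wrong for the same reason.)

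With the strengthened hypothesis you propose --- no antipodal pair of $X$ is orthogonal to all other elements of $X$ --- your argument is complete and is the statement that should replace \cref{slicedzonal}. Two small remarks on your write-up. First, in the decomposable case you need not manufacture $0\in\{\omega_1,\ldots,\omega_{d-1}\}$ via a specially chosen generator of $B_{a'}$: since $X$ spans and $m\ge 2$, some $a'\in X\setminus\{\pm a_0\}$ exists with $a'\cdot a_0=0$, so $0$ is automatically in the inner product set, and then $\cx a_0\subseteq\cZ(\S_X)$ follows at once as you indicate. Second, your observation that the corrected hypothesis holds automatically for the configurations used later is what saves the paper: in $\E_8$ and $\L_\m$ every minimal vector has companions that are neither parallel nor orthogonal to it, so the appeals to this corollary in the proofs of \tref{E8} and \tref{Leech} go through verbatim once the corollary is cited in the corrected form.
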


\begin{proof}
The proof is similar to the proof of \cref{zonal}. But in this case, when $z=r+\i s$ with $r,s\in \re^m$  is a common zero of every sliced zonal polynomial,
we may only infer that, for each $a\in X$, either $s \bot a$ or $s$ is parallel to $a$. So $s=0$ is  forced  unless any 
pair of elements from $X$ are either parallel or orthogonal. This can only happen if $d\le 2$.   \qed
\end{proof}

\begin{remark}
The orthoplex $X = \{ \pm e_i \mid 1\le i\le m\}$, consisting of the standard basis vectors together with their antipodes, is the only
antipodal spherical code with $d=2$; the claim of real zeros fails only in this case.
\end{remark}

%%%%%%%%%%%%%%%%%%%%%%%%%%%%%%%%%%%%%%%%%%%%%%%%%%%%%%%%%
\subsection{Derived designs}

A subset $X \subseteq \Omega_m$ of the unit sphere in $\re^m$ is a {\em spherical design} of strength $t$ 
(or a {\em spherical $t$-design}) if
the average over $X$ of any polynomial $F(Y)$ of total degree at most $t$ in $m$ variables is exactly equal to 
its average over the entire sphere $\Omega_m$.  These objects are well-studied; see \cite{dgs}, \cite[p.~xxii]{banito},
\cite[Chapter 14,15]{godsil} for background and \cite{banban} for recent developments. 
The following result is a useful observation of Bannai (but also see papers of M\"{o}ller, e.g., \cite{moller}):

\begin{proposition}
\label{pro:tdes}
Let $X$ be a spherical design of strength $t$ and suppose $F \in \cI(X)$. If $\deg F \le t/2$, then $F$ is trivial; i.e., $\Nm$ divides $F$.
\end{proposition}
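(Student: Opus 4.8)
\medskip

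The plan is to square $F$, use the design hypothesis to conclude that $F$ vanishes identically on the sphere $\Omega_m$, and then observe that a polynomial vanishing on $\Omega_m$ must be divisible by $\Nm$. First I would reduce to the case of real coefficients: writing $F = G + \i H$ with $G, H \in \re[Y_1,\ldots,Y_m]$, the fact that $X \subseteq \re^m$ and $F \in \cI(X)$ forces $G(x) = H(x) = 0$ at every $x \in X$, so $G, H \in \cI(X)$, and clearly $\deg G, \deg H \le \deg F \le t/2$. Since $\Nm$ is real, it is enough to prove $\Nm \mid G$ and $\Nm \mid H$, so from now on one may assume $F$ is real.

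Now consider $F^2$. It has total degree $2\deg F \le t$ and vanishes at every point of $X$, so its average over $X$ is $0$. By the defining property of a spherical $t$-design, this average equals the (surface) average of $F^2$ over the whole sphere $\Omega_m$; hence that average is $0$. But $F^2$ is continuous and pointwise nonnegative on $\Omega_m$, so its average over $\Omega_m$ can vanish only if $F^2 \equiv 0$ there, i.e.\ $F(x) = 0$ for every $x \in \Omega_m$.

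It then remains to deduce $\Nm \mid F$ from the vanishing of $F$ on the unit sphere. One elementary route: dividing $F$ by $\Nm = Y_m^2 + \bigl(\sum_{i<m} Y_i^2 - 1\bigr)$, viewed as a polynomial in $Y_m$, yields $F = \Nm\cdot Q + A + B\,Y_m$ with $A, B \in \re[Y_1,\ldots,Y_{m-1}]$; evaluating at the two sphere points $\bigl(y_1,\ldots,y_{m-1},\pm\sqrt{1 - \sum_{i<m} y_i^2}\,\bigr)$ as $(y_1,\ldots,y_{m-1})$ ranges over the open unit ball of $\re^{m-1}$ forces $A \equiv B \equiv 0$, so $\Nm \mid F$. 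Alternatively, for $m \ge 2$ the quadratic $\Nm$ is irreducible, so $(\Nm)$ is prime, hence radical, and the real sphere is Zariski-dense in the complex hypersurface $\cZ(\Nm)$, whence $\cI(\Omega_m) = \cI(\cZ(\Nm)) = \Rad(\,(\Nm)\,) = (\Nm)$ by the Nullstellensatz. Applying whichever version one prefers to $G$ and to $H$ from the first paragraph gives $\Nm \mid F$ in general.

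The only step that is not a formal consequence of the definitions is this last one — passing from ``$F$ vanishes on $\Omega_m$'' to ``$\Nm$ divides $F$'' — since it is where one genuinely uses the structure of the ideal of the sphere (the density of its real points together with the Nullstellensatz), or, in the hands-on version, the explicit parametrization of $\Omega_m$ near a regular point. Everything else follows immediately from applying the $t$-design property to the single degree-$\le t$ polynomial $F^2$.
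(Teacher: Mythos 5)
Your proof is correct and follows the same route as the paper: square $F$, invoke the $t$-design property to see the average of $F^2$ over the sphere is zero, and use nonnegativity to conclude $F$ vanishes on all of $\Omega_m$. The paper stops there and leaves the deduction ``$F$ vanishes on $\Omega_m$ implies $\Nm \mid F$'' implicit, so your final paragraphs (and the reduction to real coefficients) simply supply details the published argument omits rather than a different approach.
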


\begin{proof} The polynomial
$F^2$ is nonnegative on the sphere but vanishes on $X$. Since $X$ is a spherical $t$-design, the integral of $F^2$
over the sphere is exactly zero,  so $F$ itself must vanish on the entire sphere. \qed
\end{proof}

\noindent {\bf Notation:} Henceforth, we will not always bother to scale our vectors to unit length. If $X$ is a finite subset of 
$\re^m$ with $a\cdot a$ constant over all $a\in X$ (i.e., $X$ is a ``spherical configuration''),  we will use $\Nm$ to denote 
the polynomial $\sum_i Y_i^2 - a\cdot a$ and will consider any multiple of $\Nm$ a ``trivial'' polynomial.

\medskip

Let $X$ be a spherical configuration in $\re^m$ and consider an affine subspace $T$, of dimension $k$ say,  
which contains at least two points of $X$. Then 
$T$ intersects this  sphere in a sphere and $T$ may be coordinatized so that this sphere has unit radius and is centered at the origin
in a vector space $\re^{k}$ and $X' := X \cap T$ corresponds naturally to a spherical code in $k$-dimensional space. Special cases include  {\em derived
codes} and {\em derived designs}  (see \cite{dgs}).  Our goal in this section is to relate the ideal of $X'$ to the ideal determined
by $X$.

It will be convenient to view  $m$-dimensional  Euclidean space $\cE$ as having two coordinatizations: the natural vector space structure $\re^m$
and another vector space structure $V$ emerging from our coordinatization of $T$. For a point $P$ in the Euclidean space  $\cE$,
we will let  $p$ and $p'$
denote its $\re^m$- and $V$-coordinates, respectively. We will also use $\Omega_m$ to denote the sphere containing $X$
in $\re^m$ and $\Omega_k$ to denote the unit sphere, which contains $X'$, in the subspace $T$ of $V$.

For a $k$-dimensional subspace $T$ of $\cE$ intersecting $\Omega_m$ nontrivially, let $\mathcal{O}$ denote the center of the sphere
$T \cap \Omega_m$ and, with $\mathcal{O}$ as origin, endow $\cE$ with a vector space structure $V$ so that, in coordinates
$Y'_1, Y_2', \ldots, Y'_m$ for $V$, subspace $T$ has equation $Y'_{k+1}=\cdots = Y'_m = 0$.  To distinguish the vector space structure $\re^m$ from
that of $V$ we will use indeterminates $Y_1,\ldots, Y_m$ when working in $\re^m$.  These coordinate systems are related by an affine 
change-of-coordinate system
$$  Y = C Y' + d $$
with invertible Jacobian $C$.

\bigskip

Now for a given spherical code $X\subset \Omega_m$ in $\re^m$, let $\F = \{F_1,\ldots, F_n\}$ be a generating set for $\I = \cI(X)$. The Jacobian
matrix with respect to $\F$ and $Y_1,\ldots, Y_m$ at a point $p$ is given by 
$$ \Jac(\F,p) = \left[ \begin{array}{ccc}
\frac{ \partial F_1 }{ \partial Y_1} \big|_p  & \cdots  & \frac{ \partial F_1 }{ \partial Y_m} \big|_p  \\
     \vdots   &   \ddots &   \vdots \\
\frac{ \partial F_n }{ \partial Y_1} \big|_p & \cdots  & \frac{ \partial F_n }{ \partial Y_m}  \big|_p 
\end{array} \right] ~. $$
On the other hand, consider the $n$ polynomials $\F' = \{F_1',\ldots, F'_n\}$  in the variables $Y_i'$ given by 
$$ F'_i(Y') = F_i( CY' + d)~.$$
Clearly, the $F_i$ are recovered from the $F'_i$ by  $F_i(Y) = F'_i( C^{-1}Y - C^{-1}d)$. 
Since we consider $X'$ as an algebraic set in $T$ itself,
we also need 
$$ G_i(Y'_1,\ldots, Y'_k) = F'_i( Y'_1,\ldots, Y'_k, 0,\ldots, 0)$$
and we observe $ \frac{ \partial G_i}{\partial Y'_j} = \frac{ \partial F'_i }{ \partial Y'_j}$ when $j\le k$.  We now study the relationship
between $X'$ and the ideal $\J = \langle \G \rangle$ in $k$ variables, where $\G = \{ G_1,\ldots, G_n \}$.

\begin{proposition}  
\label{pro:derived}
With notation as above, we have
\begin{itemize}
\item[(i)] If $\cZ(\I)=X$ in $\re^m$, then $\cZ(\J) = X'$ in $T$;
\item[(ii)] If $p \in X'$ is a simple zero of $\I$, then $p'$ is a simple zero of $\J$; 
\item[(iii)] If $\I = \cI(X)$, then $\J = \cI(X')$ in $\cx[Y_1',\ldots, Y_k']$.
\end{itemize}
\end{proposition}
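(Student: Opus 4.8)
The plan is to push everything through the given affine change of coordinates $Y=CY'+d$: part (i) is then a statement about point sets, part (ii) a rank computation on Jacobians, and part (iii) follows by combining these with the Nullstellensatz and with the radicality of $\cI(X)$.

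For part (i), I would note that $y'\mapsto Cy'+d$ is a bijection of $\cx^m$, so that $F'_i(y')=F_i(Cy'+d)$ forces $\cZ(\langle F'_1,\ldots,F'_n\rangle)$ to be exactly the set of $V$-coordinate vectors of the points of $\cZ(\I)=X$. Since $G_i$ is $F'_i$ with the last $m-k$ variables set to zero, a vector in $\cx^k$ lies in $\cZ(\J)$ precisely when the corresponding point of $T$ (obtained by appending $m-k$ zeros) lies in $\cZ(\langle F'_1,\ldots,F'_n\rangle)$; reading this back through the coordinate change identifies $\cZ(\J)$ with $X\cap T=X'$. This argument uses only $\cZ(\I)=X$, not $\I=\cI(X)$.

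For part (ii), the chain rule gives $\Jac(\F',p')=\Jac(\F,p)\,C$ with $C$ invertible, so a point is a simple zero of $\I$ exactly when the $n\times m$ Jacobian of $\F'$ at the corresponding point has full rank $m$. Because $p\in X'\subseteq T$, the $V$-coordinates $p'$ of $p$ have their last $m-k$ entries equal to zero, so, using $\partial G_i/\partial Y'_j=\partial F'_i/\partial Y'_j$ for $j\le k$, the Jacobian of $\G$ at the first $k$ coordinates of $p'$ is precisely the submatrix formed by the first $k$ columns of $\Jac(\F',p')$. The only fact I then need is that any $k$ columns of a matrix of rank $m$ are linearly independent, so this submatrix has rank $k$ and the truncated point is a simple zero of $\J$.

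For part (iii), I would first apply part (i) --- legitimate because $X$ finite gives $\cZ(\cI(X))=X$ --- to obtain $\cZ(\J)=X'$, and then the Nullstellensatz to get $\cI(X')=\cI(\cZ(\J))=\Rad(\J)$, so that $\J\subseteq\cI(X')$. The crux is to show $\J$ is radical. For this I would use that $\I=\cI(X)$ is a radical zero-dimensional ideal, so each point of $X$, and in particular each point of $X'\subseteq X$, is a simple zero of $\I$; part (ii) then promotes this to the statement that every point of $\cZ(\J)=X'$ is a simple zero of $\J$; and a zero-dimensional ideal all of whose zeros are simple is radical, so $\J=\Rad(\J)=\cI(X')$. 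I expect the main obstacle to be exactly this last step: the restriction of a radical ideal to a linear subspace is not radical in general (the subspace can be tangent to the variety), so the argument must exploit that here the variety is a reduced finite set and $T$ meets it in honest points --- which is precisely the data the simple-zero/Jacobian bookkeeping of part (ii) records. I should also make sure the standard background facts ``a point is a simple zero of an ideal iff the Jacobian of a generating set attains full rank $m$ there'' and ``a zero-dimensional ideal with only simple zeros is radical'' are cited or established before this point.
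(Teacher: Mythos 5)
Your proposal is correct and follows essentially the same route as the paper: part (i) by pushing points through the affine change of coordinates, part (ii) via the chain rule identity $\Jac(\F',p')=\Jac(\F,p)\,C$ with $C$ invertible and then restricting to the first $k$ columns, and part (iii) by combining (i), (ii) and the Nullstellensatz with the fact that a zero-dimensional ideal whose zeros are all simple is radical (the paper unpacks this last fact as the multiplicity count $|X'|=\sum_{P\in X'}\mult(P)=\dim \re[Y']/\J\ge\dim \re[Y']/\Rad(\J)=|X'|$, citing Proposition 8 of Section 5.3 of \cite{iva}). The background facts you flag for citation are exactly the ones the paper relies on, including the implicit point that every element of a finite set $X$ is a simple zero of $\cI(X)$.
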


\begin{proof}
As above, we have $\I = \ip{\F} = \langle F_1,\ldots, F_n \rangle$ and $\J = \ip{\G} = \ip{G_1,\ldots, G_n}$. Let $P \in X'$ with $\re^m$-coordinates $p$ 
and $V$-coordinates $p'$. Choose $G_i$ ($1\le i \le n$) and observe
$ G_i( p'_1,\ldots, p'_k ) = F_i' (p') = F_i( p ) = 0$ since $P \in X$.
Therefore $X' \subseteq \cZ(\J)$. Conversely, suppose $P \in \cZ(\J) \subset T$. Then, for each $i$,  we have $G_i( p'_1,\ldots, p'_k ) = 0$
so that $F'_i(p') = 0$ and $F_i( p ) = 0$ so that $P \in X$. But $P \in T$ so $P \in X'$. This proves {\sl (i)}.

By the Chain Rule, the Jacobians of the systems $\{F_i\}$ and $\{F'_i\}$ of polynomials in their respective coordinate systems
are related by 
$$ \Jac(\F',p') = \Jac(\F,p) \cdot C $$
where $C$ is the matrix of coefficients of the affine change-of-coordinates defined above. Since $C$ is invertible, $ \Jac(\F',p')$ and $\Jac(\F,p)$
have the same column rank. So, if $p$ is a simple zero (i.e., a smooth point) of $\I$, we have that $\rank \Jac(\F,p) = m$ so that $\rank \Jac(\F',p') = m$ also.
Now $\Jac(\G,[p'_1,\ldots,p'_k])$ is just a submatrix of $ \Jac(\F',p')$, obtained by restricting to the first $k$ columns. So,  if $p$ is a simple zero of $\I$, 
then this latter Jacobian has full column rank and the Zariski tangent space of $\J$ at $p'$ is zero-dimensional, proving {\sl (ii)}.

To prove {\sl (iii)}, we use {\sl (ii)} to obtain
$$|X'| = \sum_{P \in X'} \mult ( P ) = \dim \re[Y']/ \J  \ge \dim \re[Y']/ \Rad(\J) = |X'| $$
(see Section 5.3, Proposition 8 in \cite{iva}) which implies that $\dim \re[Y']/ \J  =\dim \re[Y']/ \Rad(\J) $ so that $\J = \Rad(\J)$ and $\J$ is a radical ideal.
Now, applying this, the Nullstellensatz,  and {\sl (i)} in turn, we have
$$ \J  = \Rad(\J) = \cI( \cZ(\J) ) =\cI(X') $$
proving {\sl (iii)}. \qed
\end{proof}

%%%%%%%%%%%%%%%%%%%%%%%%%%%%%%%%%%%%%%%%%%%%%%%%%%%%%%%%%
\section{Ideals for $E_6$, $E_7$ and $E_8$}
\label{sec:e8}

%In this paper, we adopt a perhaps unusual notational convention. 
First, a remark on notation. In this paper, we denote by $E_6$,
$E_7$, $E_8$ and $\Lambda_{24}$ the four famous lattices we consider and use $\E_6$,
$\E_7$, $\E_8$ and $\L_{24}$ to denote the set of shortest vectors of each of these
lattices, respectively.

Following \cite[Eq.~(97), p120]{splag}, we consider the spherical configuration (on a sphere of radius $\sqrt{2}$ centered at the
origin in $\re^8$) consisting of the shortest vectors of the $E_8$ lattice
$$ \E_8 = 
\left\{ e_i \pm e_j \mid 1 \le i\neq j \le 8\right\} \cup \left\{ \left(\pm \frac{1}{2}, \pm \frac{1}{2},\ldots,  \pm \frac{1}{2} \right) \bigm| \text{an even number of $-$ signs} \right\}. $$

For each pair $\pm a$ of antipodal  points in $X$, choose a set $B_a$  as in Equation (\ref{eqn:Ba}) of seven sliced zonal polynomials of degree
four. Now let  $\G$ be the set consisting of $\Nm$ along with these $120 \cdot 7$ sliced zonal polynomials.

\begin{theorem} 
\label{thm:E8}
The ideal $\I = \langle \G \rangle$ generated by the set $\G$ defined in the preceding paragraph has the following properties:
\begin{itemize}
\item[(i)] $\cZ(\I) = \E_8$ in $\cx^8$; 
\item[(ii)]  each zero of $\I$ is simple, so $\I$ is a radical ideal;
\item[(iii)]  the smallest degree of a non-trivial polynomial in $\cI(\E_8)$ is four;
\item[(iv)]  $\cI(\E_8)$ is generated by a set of polynomials all having degree four or less.
\end{itemize}
\end{theorem}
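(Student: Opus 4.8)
The plan is to prove (i) first, deduce (ii) from a Jacobian computation together with the multiplicity/dimension count already used in the proof of \pref{derived}, and then obtain (iii) and (iv) almost for free. To orient the computation: a minimal vector of $E_8$ has squared length $2$ and $E_8$ is even, while $|a\pm b|^2\ge 0$ forces $|a\cdot b|\le 2$, so the inner product set of $\E_8$ is $\{2,1,0,-1,-2\}$; thus $d=4$ here, the relevant one-variable polynomial is $f(t)=(t-2)(t-1)t(t+1)(t+2)$, and each sliced zonal in $\G$ is $S=(a\cdot Y-1)(a\cdot Y)(a\cdot Y+1)(b\cdot Y)$ with $b\perp a$, of degree exactly $4$. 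Since $\E_8$ spans $\re^8$ and $d=4\ge 3$, \cref{slicedzonal} applies to $\S_{\E_8}\subseteq\I$, so every zero of $\I$ is real and it remains only to identify the real zeros.

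The heart of the proof, and the step I expect to be the main obstacle, is this determination of the real zero set. Let $z\in\cZ(\I)\subseteq\re^8$; then $\Nm(z)=0$ gives $z\cdot z=2$. Fix an antipodal pair $\pm a$ in $\E_8$. Each of the seven sliced zonals $(b_{a,i}\cdot Y)(a\cdot Y-1)(a\cdot Y)(a\cdot Y+1)$ in $B_a$ vanishes at $z$, so either $a\cdot z\in\{1,0,-1\}$, or $b_{a,i}\cdot z=0$ for every $i$. In the second case $z$ is orthogonal to the hyperplane $a^\perp$, hence $z=\lambda a$, and $z\cdot z=2\lambda^2=2$ forces $z=\pm a\in\E_8$. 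If instead this second case never occurs, then $a\cdot z\in\{1,0,-1\}\subseteq\ints$ for every $a\in\E_8$, so $z$ has integer inner product with every minimal vector of $E_8$; because these minimal vectors generate the root lattice $E_8$ and $E_8$ is unimodular, $z\in E_8^\ast=E_8$, and $z\cdot z=2$ makes $z$ a minimal vector, i.e.\ $z\in\E_8$. The reverse inclusion $\E_8\subseteq\cZ(\I)$ is immediate from the definition of the sliced zonals, so $\cZ(\I)=\E_8$, proving (i). This lattice-theoretic observation — a vector of the right norm whose inner products with all roots are integral is itself a root — is the one genuinely new ingredient; everything else is bookkeeping with the machinery of \sref{prep}.

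For (ii) I would compute $\Jac(\G,p)$ at a point $p=\pm a\in\E_8$: the gradient of $\Nm$ at $p$ is $2p$, a nonzero multiple of $a$, while $b_{a,i}\cdot p=0$ kills all but one term of the product rule, giving $\nabla S|_p=\big((a\cdot p)-1\big)(a\cdot p)\big((a\cdot p)+1\big)\,b_{a,i}$, a nonzero multiple of $b_{a,i}$ since $a\cdot p=\pm 2$. As $\{a,b_{a,1},\dots,b_{a,7}\}$ is a basis of $\re^8$, the rows of $\Jac(\G,p)$ already span $\re^8$, so $p$ is a simple zero; then $\dim\cx[Y]/\I=\sum_{p\in\E_8}\mult(p)=|\E_8|=|\cZ(\I)|=\dim\cx[Y]/\Rad(\I)$, forcing $\I=\Rad(\I)$, exactly as in the proof of \pref{derived}(iii). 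Part (iii) is then short: the sliced zonals in $\G$ are nontrivial (a product of four linear forms cannot be divisible by the irreducible quadratic $\Nm$) and have degree $4$, giving the upper bound, while $\E_8$ is a spherical $7$-design, so by \pref{tdes} every element of $\cI(\E_8)$ of degree at most $3$ is trivial, matching this with the lower bound. Finally (iv) is immediate: by (i), (ii) and the Nullstellensatz, $\cI(\E_8)=\cI(\cZ(\I))=\Rad(\I)=\I=\ip{\G}$, and every polynomial of $\G$ has degree at most $4$.
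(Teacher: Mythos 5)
Your proposal is correct and follows essentially the same route as the paper: \cref{slicedzonal} to force all zeros to be real, the unimodularity of the $E_8$ lattice to pin down $\cZ(\I)=\E_8$, a Jacobian computation showing each zero is simple (hence $\I$ is radical), the spherical $7$-design property via \pref{tdes} for the degree lower bound, and the Nullstellensatz for (iv). The only cosmetic difference is in (ii): you span $\re^8$ using $\nabla \Nm|_{\pm a}\propto a$ together with the gradients of the sliced zonals based at $a$ itself (each $\propto b_{a,i}$), whereas the paper, as detailed in the proof of \tref{Leech}, uses sliced zonals based at eight linearly independent points $c\neq \pm a$, whose gradients at $a$ are nonzero multiples of $c$; both choices are valid.
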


\begin{proof}
The only technical step is {\sl (ii)}, and an almost identical argument is given for the Leech lattice example in the proof of 
\tref{Leech}, so we give only a sketch here.

We apply \cref{slicedzonal} to see that the ideal $\I$, which contains the sliced zonal 
ideal $\S_X$, has all of its zeros contained in $\re^8$. Since $\Nm \in \I$, every zero 
lies on the sphere of radius $\sqrt{2}$ centered at the origin.  If $a\in \cZ(\E_8)$, then
$a \cdot b \in \{-2,-1,0,1,2\}$ for every $b \in \E_8$. It then follows that $a$ has
integer inner product with every vector in the lattice $E_8$ spanned by the vectors
in $\E_8$. But $E_8$ is a unimodular lattice, so this implies $a \in E_8$ and, given
that $\Nm \in \G$, we must have $a\in \E_8$. 
This proves {\sl (i)}. 
(Alternatively, we could use the optimality of $\E_8$ as a kissing configuration \cite[p120]{splag}, but this
argument, suggested by the referee, is more elegant.)

Now that we have identified all of the zeros of $\I$, we prove
that each of these is a simple zero  
by proving that the Zariski tangent space at the point is zero-dimensional. The lack of 
multiple zeros then implies that the ideal is indeed radical, giving us {\sl (ii)}. In order to compute the dimension of the tangent space at a point
$a$, we need to locate eight polynomials in $\I$ whose gradients span $\re^8$. The gradient of a sliced zonal based at $c\in X$,
evaluated at $a \neq \pm c$ is a nonzero scalar multiple of $c$ itself, so we have sufficient supply of such polynomials. (See
the proof of \tref{Leech} for full details.)

Since our generating
set contains non-trivial polynomials of degree four and $\E_8$ is a spherical 7-design,  \pref{tdes} gives us {\sl (iii)}.
Finally, once the proof of {\sl (ii)} is complete, we apply it, together with {\sl (i)} and the Nullstellensatz, 
to conclude that our generating set indeed generates $\cI(\E_8)$. \qed 
\end{proof}
The ideal $\cI(\E_8)$ contains a number of other interesting polynomials of degree four. For example, if 
$\{\{ i_1,i_2,i_3,i_4\}, \{ j_1,j_2,j_3,j_4\}\}$ is any partition of $\{1,\ldots,8\}$ into two sets of size four,
then the polynomial $Y_{i_1} Y_{i_2} Y_{i_3} Y_{i_4} -  Y_{j_1}  Y_{j_2}  Y_{j_3}  Y_{j_4} $ is easily seen to
vanish on each of the 240 vectors given above. 
(But these polynomials also vanish at any point with at least five zero entries.)

\bigskip

We recall that the shortest vectors of the $E_7$ root lattice are conveniently given as a derived design of 
$\E_8$ (see \cite[p120]{splag}):
$$ \E_7 = \left\{ a \in \E_8 \mid a_7 = a_8 \right\} ~. $$
This allows us to identify a generating set for the ideal $\cI(\E_7)$. We may immediately apply  \pref{derived} 
to obtain a generating set for $\cI(\E_7)$ consisting of polynomials of degree two and four.
However, we can do better by using just a subset of these polynomials and making a small modification.

We have chosen $\E_7$ to be the set of 
all vectors in $\E_8$ having inner product zero with $(0,0,0,0,0,0,1,-1)$. There are 56 vectors in $\E_8$ having inner 
product one with this vector. For each of these 56 vectors $b$, consider the zonal polynomial
$$  C_b(Y) := (b\cdot Y - 1)(b\cdot Y) (b \cdot Y + 1) ~. $$
Clearly since neither $b$ nor $-b$ belongs to $\E_7$, this gives a cubic polynomial which vanishes on each point of $\E_7$.
The ideal $\I = \langle C_b(Y) \mid b \in \E_8, \ b_7-b_8=1 \rangle$ generated by these 56 cubics, together with  $\Nm$ is 
easily verified by computer to be a radical ideal with $\cZ(\I) = \E_7$. But we can prove this directly as well.

\begin{theorem} 
\label{thm:E7}
Let $\I = \langle C_b(Y) \mid b \in \E_8, \ b_7-b_8=1 \rangle$ as constructed in the previous paragraph. Then 
\begin{itemize}
\item[(i)] $\cZ(\I) = \E_7$;
\item[(ii)] $\I$ is a radical ideal;
\item[(iii)]  the smallest degree of a non-trivial polynomial in $\cI(\E_7)$ is three;
\item[(iv)]  $\cI(\E_7)$ is generated by a set of polynomials all having degree three or less. \qed
\end{itemize}
\end{theorem}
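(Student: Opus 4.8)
\medskip

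The plan is to transcribe the argument of \tref{E8} (whose details are spelled out in the proof of \tref{Leech}), working in a $7$-dimensional coordinate system for the hyperplane $H=\{Y_7=Y_8\}\supseteq\E_7$. Concretely, substitute $Y_8:=Y_7$, so that $\I$ becomes an ideal of $\cx[Y_1,\dots,Y_7]$ (a coordinatization of $H$, as in the setup of \pref{derived}), the quadric $\Nm$ becomes $Y_1^2+\dots+Y_6^2+2Y_7^2-2$, and each $C_b$ becomes the cubic zonal polynomial $f(\bar b\cdot Y)$ with $f(t)=t(t-1)(t+1)=t^3-t$ and $\bar b:=(b_1,\dots,b_6,\,b_7+b_8)\in\re^7$; a point $a=(a_1,\dots,a_7)\in\cx^7$ is identified with $\tilde a:=(a_1,\dots,a_6,a_7,a_7)\in\cx^8$, so $\bar b\cdot a=b\cdot\tilde a$ and $\Nm(a)=0$ means $\tilde a\cdot\tilde a=2$. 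First I would note that the $56$ vectors $\bar b$ span $\re^7$: already from $b=e_7\pm e_i$ (for $i\le 6$) one gets $\bar b=\pm e_i+e_7$. Hence \lref{realzeros}(iii), applied to the sub-ideal $\I_0:=\langle f(\bar b\cdot Y):b_7-b_8=1\rangle\subseteq\I$, shows that $\I_0$, and therefore the larger ideal $\I$, has only real zeros; since $\Nm\in\I$, each zero $a$ satisfies $\tilde a\cdot\tilde a=2$.

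Next I would prove (i). The inclusion $\E_7\subseteq\cZ(\I)$ is routine: if $\tilde a\in\E_7$ and $b_7-b_8=1$, then $\tilde a\cdot b\in\ints$ (both lie in the lattice $E_8$) and $|\tilde a\cdot b|\le 2$ by Cauchy--Schwarz, while $\tilde a\cdot b=\pm 2$ is impossible because neither $b$ nor $-b$ has equal seventh and eighth coordinates; hence $\tilde a\cdot b\in\{-1,0,1\}$ and $C_b(a)=f(\tilde a\cdot b)=0$, and also $\Nm(a)=0$. For the reverse inclusion, take $a\in\cZ(\I)$; then $\tilde a\cdot b=\bar b\cdot a\in\{-1,0,1\}\subseteq\ints$ for all $56$ vectors $b$. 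The key point is that the orthogonal projections onto $H$ of these $56$ vectors of $\E_8$ generate a lattice $L$ containing the root lattice $E_7=E_8\cap H$ --- each root of $E_7$ can be exhibited explicitly as a $\ints$-combination of these projections --- so $\tilde a\in L^{\ast}\subseteq E_7^{\ast}$. Since $E_7$ is an even lattice of index $2$ in $E_7^{\ast}$ whose non-trivial coset has minimal squared norm $\tfrac32$ (so every vector of $E_7^{\ast}$ outside $E_7$ has squared norm in $\tfrac32+2\ints$, never $2$), the equation $\tilde a\cdot\tilde a=2$ forces $\tilde a\in E_7$, hence $\tilde a\in\E_7$. (Alternatively one can argue directly: the relations $\tilde a\cdot(e_7\pm e_i)\in\{-1,0,1\}$, $i\le 6$, already force $a_1,\dots,a_7$ to be either all integers or all half-odd-integers, and then $\tilde a\cdot\tilde a=2$ --- supplemented in the half-integer case by the relations from the $32$ half-integer vectors $b$ --- pins $\tilde a$ down to exactly the vectors of $\E_7$.) This gives (i).

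Parts (ii) and (iv) then go through exactly as in \tref{E8}. For (ii): at a point $a$ with $\tilde a\in\E_7$, the gradient of $f(\bar b\cdot Y)$ equals $f'(\bar b\cdot a)\,\bar b$, and since $\bar b\cdot a=\tilde a\cdot b\in\{-1,0,1\}$ with $f'(0)=-1$ and $f'(\pm 1)=2$ nonzero, this is a nonzero multiple of $\bar b$; as the $\bar b$ span $\re^7$, the Jacobian of the generators of $\I$ has rank $7$ at $a$, the Zariski tangent space of $\I$ at $a$ is $\{0\}$, and $a$ is a simple zero. Hence $\dim\cx[Y]/\I=\sum_{a\in\cZ(\I)}\mult(a)=|\E_7|=\dim\cx[Y]/\Rad(\I)$, so $\I=\Rad(\I)$ --- the same count as in the proof of \pref{derived}(iii). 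For (iv): by (i), (ii) and the Nullstellensatz, $\I=\cI(\cZ(\I))=\cI(\E_7)$, and $\I$ is generated by $\Nm$ and the $56$ cubics $C_b$, each of degree at most three. Finally (iii): the minimal vectors of $E_7$ form a spherical $5$-design \cite{dgs}, so by \pref{tdes} every polynomial of $\cI(\E_7)$ of degree at most two is divisible by $\Nm$, hence trivial; each $C_b$ is a non-trivial cubic (the cube of a linear form is not divisible by the irreducible quadratic $\Nm$), so the smallest degree of a non-trivial polynomial in $\cI(\E_7)$ is three.

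The hard part will be the reverse inclusion in (i) --- excluding spurious zeros. In the $E_8$ case, unimodularity of $E_8$ instantly upgrades ``integer inner product with all $240$ minimal vectors'' to ``lies in $E_8$''. Since $E_7$ is not unimodular, one must instead establish two facts about $E_7^{\ast}$: that the $56$ projected vectors generate a lattice at least as large as $E_7$, and that $E_7^{\ast}$ has no vector of squared norm $2$ outside $E_7$ --- or else push through the equivalent, somewhat longer, coordinate bookkeeping indicated above. That lattice (or coordinate) analysis is the only genuinely new ingredient; the rest is a transcription of the $E_8$, equivalently Leech, argument.
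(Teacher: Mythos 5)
Your argument is correct, but for part (i) it takes a genuinely different route from the paper's. The paper proves $\cZ(\I)=\E_7$ by first verifying --- the ``messy'' explicit computation illustrated by the sample identity with the $Q_i$ and $R_i$ --- that every degree-five zonal polynomial $Z_{f,c}$, $c\in\E_7$, already lies in the ideal generated by the $56$ cubics; combined with \cref{zonal} and $\Nm$, this shows every zero is a real norm-$2$ point of $T$ that is equal to, or at least $\pi/3$ away from, each element of $\E_8$, and the optimality of $\E_8$ as a kissing configuration then forces the zero into $\E_8\cap T=\E_7$. You instead adapt to the non-unimodular $E_7$ the unimodularity argument the paper uses for $\E_8$ and $\L_{24}$: a zero has integral inner product with the $56$ projected vectors, these generate a lattice containing $E_7$, so the zero lies in $E_7^{\ast}$, and since the norms on the nontrivial coset of $E_7^{\ast}$ lie in $\tfrac{3}{2}+2\ints$, the condition $\Nm=0$ pins the zero to $\E_7$. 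Your route avoids both the ideal-membership computations and the appeal to kissing-number optimality, at the price of two lattice facts (which you state and justify correctly, though the claim that the projections $\ints$-generate a lattice containing $E_7$ is asserted rather than written out); the paper's route stays inside the zonal-ideal/kissing framework and records, as a by-product, the nontrivial fact that the quintic zonals of $\E_7$ belong to the cubic ideal. Parts (ii)--(iv) --- realness via \lref{realzeros}, the Jacobian/simple-zero computation, radicality, the Nullstellensatz, and the $5$-design bound from \pref{tdes} --- follow the paper's template (which defers details to \tref{E8} and \tref{Leech}); your gradient step is in fact slightly cleaner, since the base vectors $b$ never lie in $\E_7$, so no selection of a suitable sliced zonal is needed. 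One cosmetic slip: $C_b$ is a product of three distinct linear forms, not ``the cube of a linear form'', though your divisibility argument against the irreducible quadric $\Nm$ stands as written.
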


\begin{proof}
Let $\mathcal{H}$ be the set of $56$ polynomials on the subspace $T$ of vectors in $\re^8$ orthogonal to $a=(0,0,0,0,0,0,1,-1)$
given by 
$$ \left\{ (b\cdot Y - 1)(b\cdot Y) (b \cdot Y + 1)  \mid b \in \E_8, \ b\cdot a = 1 \right\} ~.$$
It is messy, but not hard, to verify that each zonal polynomial $Z_{f,a}$ for $a\in \E_7$, $f(t)=t(t^2-1)(t^2-4)$, belongs to 
the ideal generated by these 56 polynomials. (A sample computation is included below.) 
\cref{zonal} tells us that any common zero of these 56 polynomials
has only real entries. So, including $\Nm$ in our generating set, we find that any vector in $\cZ(\I)$ lies on the unit sphere
in $\re^8$ and is either equal to or at least $\pi/3$ radians away from any element of $\E_8$. Again applying the optimality of 
$\E_8$ as a kissing configuration, we deduce that $\cZ(\I) = \E_7$ inside subspace $T$. We may 
then use the technique outlined in the previous proof to verify that each element of $\E_7$ 
is a simple zero of $\I$; so this ideal is indeed radical.  Since $\E_7$ is a spherical 5-design, the generating set we have 
described gives us both {\sl (iii)} and {\sl (iv)}. \qed
\end{proof}

\begin{example}
Let $c=(1,1,0,0,0,0,0,0)$. We express the degree five zonal polynomial 
$$Z_{f,c}(Y) = (c\cdot Y - 2) (c\cdot Y - 1) (c\cdot Y ) (c\cdot Y + 1) (c\cdot Y + 2) $$
(restricted to the subspace $Y_7=Y_8$) in terms of the four cubics $C_b(Y)$ as $b$ ranges over
$$ b_1,b_3=(\pm 1,0,0,0,0,0,1,0), \qquad \text{and} \qquad 
b_2,b_4=(0,\pm1,0,0,0,0,1,0).
$$
Let 
\begin{eqnarray*}
Q_1(Y) =  \frac{1}{2} (Y_1+Y_2- 4Y_7)( \phantom{-}  Y_1+4Y_2 + Y_7), &\qquad& R_1(Y) = 3Y_2^2 + 5Y_7^2 - 2, \\
Q_2(Y) =  \frac{1}{2} (Y_1+Y_2- 4Y_7)(\phantom{-}  4Y_1+Y_2 + Y_7),     &\qquad&R_2(Y) = 3Y_1^2 + 5Y_7^2 - 2,   \\ 
Q_3(Y) =  \frac{1}{2}  (Y_1+Y_2+ 4Y_7)(-Y_1-4Y_2 + Y_7),  &\qquad& \\
Q_4(Y) =  \frac{1}{2}  (Y_1+Y_2+ 4Y_7)(-4Y_1-Y_2 +Y_7). &\qquad& 
\end{eqnarray*}
Then we have 
$$  Z_{f,c} =  \left[Q_1 + R_1 \right] C_{b_1} + 
 \left[Q_2  + R_2 \right] C_{b_2}  +   \left[Q_3 - R_1 \right] C_{b_3} +   \left[Q_4 - R_2 \right] C_{b_4} ~  .$$
Likewise, each $Z_{f,c}(Y)$ for $c \in \E_7$ is shown to belong to the ideal $\cI$ in \tref{E7} by finding
similar (but more complicated) expressions.
\end{example}

\bigskip

Next, the shortest vectors of the root lattice $E_6$ may be taken as a derived spherical design of $\E_7$. As the parameters
we compute (smallest degree of a nontrivial polynomial in the ideal, smallest possible maximum degree of polynomials in any 
generating set) are independent of the choice of any full-dimensional representation of the configuration (up to invertible 
affine transformation), we find it convenient again to work with $\E_6$ given as a derived design of $\E_8$ (see \cite[p120]{splag}):
$$ \E_6 = \left\{ a \in \E_8 \mid a_6 = a_7 = a_8 \right\} ~. $$
This, too, is a spherical 5-design. Now we apply \tref{E7} together with \pref{derived} and
 \pref{tdes} to obtain

\begin{theorem} 
\label{thm:E6}
Let $\I$ be the ideal considered in \tref{E7} above and let $\J$ be the ideal
in $\re[Y'_1,\ldots, Y'_6]$ as in \pref{derived}. Then
\begin{itemize}
\item[(i)] $\cZ(\J) = \E_6$;
\item[(ii)] $\J$ is a radical ideal;
\item[(iii)]  the smallest degree of a non-trivial polynomial in $\cI(\E_6)$ is three;
\item[(iv)]  $\cI(\E_6)$ is generated by a set of polynomials all having degree three or less. \qed
\end{itemize}
\end{theorem}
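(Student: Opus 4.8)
The plan is to derive the theorem essentially as a corollary of \tref{E7}, since $\E_6$ is cut out of $\E_7$ by one further linear section. The first point to record is that \tref{E7} delivers more than its four listed items: its proof establishes (via radicality, $\cZ(\I)=\E_7$ and the Nullstellensatz) that $\I=\cI(\E_7)$, which is precisely the hypothesis needed for part {\sl (iii)} of \pref{derived}. Since $\E_6=\{a\in\E_8 \mid a_6=a_7=a_8\}\subseteq\E_7$ and the subspace $\{Y_6=Y_7=Y_8\}$ passes through the common center of the spheres carrying $\E_8$, $\E_7$ and $\E_6$, the derived-code setup of \pref{derived} applies with $X=\E_7$, with $T$ the slice of the radius-$\sqrt2$ sphere by this subspace (it contains the $72$ roots of $\E_6$, far more than two points), and with an affine change of coordinates $Y=CY'+d$ in which $d=0$ and $C$ is invertible and linear; by construction $\J$ is then the ideal generated by the images $G_i$ of the $56$ cubics $C_b$ together with the image of $\Nm$.

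Next I would feed this into \pref{derived}. Because $\I=\cI(\E_7)$ is a radical ideal with finite zero set $\E_7$, every one of its zeros is simple; in particular every point of $\E_6\subseteq\E_7$ is a simple zero of $\I$. Parts {\sl (i)}, {\sl (ii)} and {\sl (iii)} of \pref{derived} then give at once that $\cZ(\J)=\E_6$ inside $T$, that each point of $\E_6$ is a simple zero of $\J$, and that $\J$ is a radical ideal equal to $\cI(\E_6)$ in $\cx[Y'_1,\ldots,Y'_6]$. This settles items {\sl (i)} and {\sl (ii)} of the theorem.

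For item {\sl (iv)}: the substitution $Y=CY'$ is linear and hence cannot raise total degree, so the generators $G_i$ of $\J=\cI(\E_6)$ all have degree at most three (those coming from the cubics $C_b$ have degree $\le 3$, the one coming from $\Nm$ has degree $\le 2$). For the remaining inequality in {\sl (iii)}: $\E_6$ is a spherical $5$-design, so \pref{tdes} shows that $\cI(\E_6)$ contains no non-trivial polynomial of degree $\le 2$, which gives a lower bound of $3$. That this bound is attained is automatic: were every $G_i$ arising from a cubic $C_b$ a multiple of $\Nm$ (or identically zero), then $\cI(\E_6)=\langle\Nm\rangle$, whose zero set is infinite rather than the finite set $\E_6$ --- a contradiction; alternatively one restricts a $C_b$ whose $b$ has nonzero orthogonal projection onto $\{Y_6=Y_7=Y_8\}$ and notes that a product of three linear forms cannot be divisible by the irreducible quadratic $\Nm$. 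Hence the smallest degree of a non-trivial polynomial in $\cI(\E_6)$ is exactly three.

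I do not expect a genuine obstacle here: the only care required is in matching the hypotheses of \pref{derived} --- in particular in extracting the equality $\I=\cI(\E_7)$ from the \emph{proof} of \tref{E7} rather than merely from its statement --- and in the routine bookkeeping that the linear section neither raises degrees nor collapses every cubic generator.
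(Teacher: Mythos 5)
Your proposal is correct and takes essentially the same route as the paper, whose (implicit, one-line) proof of this theorem is exactly the combination you describe: \tref{E7} (whose proof gives $\I=\cI(\E_7)$ via radicality, $\cZ(\I)=\E_7$ and the Nullstellensatz), then \pref{derived} for items {\sl (i)}, {\sl (ii)} and $\J=\cI(\E_6)$, then \pref{tdes} with the spherical $5$-design property for the degree bounds. Your added bookkeeping (simplicity of the zeros, degree preservation under the linear slice, existence of a non-trivial restricted cubic) just spells out details the paper leaves implicit.
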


%%%%%%%%%%%%%%%%%%%%%%%%%%%%%%%%%%%%%%%%%%%%%%%%%%%%%%%%%
\section{The Leech lattice}
\label{sec:leech}

The shortest vectors of the Leech lattice are described by Leech in \cite[Sec.~2.31]{leech2} (see also \cite{leech1}) 
but can also be found in more recent references
such as \cite[p133]{splag}.  These $196560$ vectors (scaled by $\sqrt{8}$ for convenience) 
are described in the following table; we will denote this set of vectors by $\L_\m$.

\begin{center}
\begin{tabular}{|c|r|l|}\hline
Shape & Number & Description \\ \hline\hline
$(\pm2^8, 0^{16} )$    &$2^7 \cdot 759$ & same support as some weight-8 Golay \\
                                    &                          & codeword, even number of minus signs \\ \hline
$(\mp 3, \pm1^{23})$  & $24 \cdot 2^{12}$ &  upper signs appear on the support of some \\
                                    &                              &  Golay codeword \\ \hline
$(\pm 4^2, 0^{22})$ &  $4 \cdot \binom{24}{2} $  & any two non-zero entries of absolute value 4 \\ \hline 
\end{tabular}
\end{center}
Leech vectors of the first two types are specified in terms of the supports of codewords of the extended binary Golay code
$\mathsf{G}_\m$. The reader unfamiliar with this code may refer to any standard text on coding theory, or any of \cite{del,splag,bcn}.
For the sake of completeness, one may instead take $\mathsf{G}_\m$ to be the binary rowspace of the $12\times 24$ matrix 
$[I | J-A]$ where $A$ is the adjacency matrix of the icosahedron and $J$ is the all-ones matrix.

One routinely checks (or verifies through the references) that, for any $a,b\in \L_\m$, the inner product $a\cdot b$ belongs to the set
$\{32,16,8,0,-8,-16,-32\}$ so define $\omega_i = 24-8i$ for $1\le i\le 5$ and $\omega_0,\omega_6 = \pm 32$.

\bigskip

For  $a \in \L_\m$, consider the zonal polynomial based at $a$ and determined by $f(t) = \prod_{h=0}^6 (t-\omega_h)$:
$$ Z_{f,a}(Y) = \prod_{h=0}^6 \left( a\cdot Y - \omega_h\right) = (a_1 Y_1 + \cdots a_\m Y_\m - 32) \cdots (a_1 Y_1 + \cdots a_\m Y_\m + 32) ~.$$
Next, for $a\in X$ and nonzero $b \in \re^\m$ with $b\cdot a =  0$, construct the degree six sliced zonal polynomial (\ref{eqn:sliced-def})
$$ S_{f,a,b}(Y) = (b \cdot Y)  \prod_{h=1}^5 \left( a\cdot Y - \omega_h\right) ~. $$
As stated in \sref{antip}, $S_{f,a,b}$ vanishes on $X$ whenever $a\in X$ and $b\bot a$.

Now we construct our ideal for $\L_\m$. For each antipodal pair $\pm a$ of vectors in $\L_\m$, extend $\{a \}$ to an
orthogonal basis $\{ a, c^{(1)}(a), \ldots, c^{(23)}(a) \}$ for $\re^\m$. 
It will be convenient to abbreviate $S_{f,a,c^{(i)}(a)}$ to $S_{a,i}$ for the remainder of this discussion. Now  consider the set 
$$ \G  = \left\{ S_{a,i}  \mid \pm a\in X, \ 1\le i\le 23 \right\} \cup \left\{  \Nm \right\}~.$$  
We claim that $\I = \langle \G \rangle $ is the ideal we seek.

\begin{theorem}
\label{thm:Leech}
Let $\L_\m$ be the set of shortest vectors of the Leech lattice, as defined above. Let $\I = \langle \G \rangle$. Then
\begin{itemize}
\item[(i)] $\cZ(\I) = \L_\m$ in $\cx^\m$; 
\item[(ii)]  each zero of $\I$ is simple, so $\I$ is a radical ideal;
\item[(iii)]  the smallest degree of a non-trivial polynomial in $\cI(\L_\m)$ is six;
\item[(iv)]  $\cI(\L_\m)$ is generated by a set of polynomials all having degree six or less.
\end{itemize}
\end{theorem}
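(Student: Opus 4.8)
The plan is to follow the proof of \tref{E8} essentially verbatim, since the structure of the two arguments is identical; parts (iii) and (iv) then drop out of parts (i) and (ii) together with \pref{tdes} and the Nullstellensatz, so the real content is in describing $\cZ(\I)$ and showing every zero is simple. Three facts about the Leech lattice will be needed: its inner-product set $\{0,\pm 8,\pm 16,\pm 32\}$ (recorded above); the fact that, after rescaling by $\sqrt 8$, the lattice generated by $\L_\m$ is the unimodular Leech lattice (hence self-dual up to this scaling); and the well-known fact that $\L_\m$ is a spherical $11$-design.

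For (i): since $\L_\m$ spans $\re^\m$ and $d=6\ge 3$, \cref{slicedzonal} applies to $\I$ — which contains the sliced-zonal ideal $\S_{\L_\m}$, the vectors $c^{(i)}(a)$ forming a basis of $a^\perp$ — so every zero of $\I$ is real. Given real $z\in\cZ(\I)$, the relation $\Nm\in\G$ gives $z\cdot z=32$. Fixing $a\in\L_\m$, vanishing of $S_{a,i}(z)$ for all $i$ forces either $a\cdot z\in\{\omega_1,\dots,\omega_5\}=\{0,\pm 8,\pm 16\}$ or $z\in\spn\{a\}$; in the latter case $z\cdot z=a\cdot a=32$ gives $z=\pm a\in\L_\m$. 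Hence either $z\in\L_\m$ already, or $a\cdot z\in 8\ints$ for every $a\in\L_\m$, so $z$ has inner product in $8\ints$ with every vector of the lattice $M$ generated by $\L_\m$; the standard duality argument (using that $\tfrac{1}{\sqrt{8}}M=\Lambda_\m$ is unimodular) then puts $z$ in $M$, and $z\cdot z=32$ identifies it as a shortest vector, so $z\in\L_\m$. With the obvious inclusion $\L_\m\subseteq\cZ(\I)$ this gives $\cZ(\I)=\L_\m$. I expect this lattice step — which is just the referee's $E_8$ argument reused — to be the one requiring the most care; everything else is formal.

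For (ii): I would show each $a\in\L_\m$ is a simple zero by exhibiting $\m$ polynomials of $\I$ with linearly independent gradients at $a$. The gradient of $\Nm$ at $a$ is $2a$. For $c\in\L_\m$ with $c\ne\pm a$, write $S_{c,i}(Y)=(c^{(i)}(c)\cdot Y)\,P(c\cdot Y)$ with $P(t)=\prod_{h=1}^{5}(t-\omega_h)$; the product rule gives $\nabla S_{c,i}\big|_a=P(c\cdot a)\,c^{(i)}(c)+(c^{(i)}(c)\cdot a)\,P'(c\cdot a)\,c$. Since $c\ne\pm a$ we have $c\cdot a=\omega_h$ for some $1\le h\le 5$, so $P(c\cdot a)=0$ and the gradient is the scalar $(c^{(i)}(c)\cdot a)\,P'(\omega_h)$ times $c$; this is nonzero for at least one $i$, because $P'(\omega_h)\ne 0$ (the $\omega_h$ are distinct) and because $a\perp c^{(i)}(c)$ for all $i$ would force $a\in\spn\{c\}$, i.e.\ $a=\pm c$. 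So at $a$ we have available the gradient $2a$ together with nonzero multiples of every $c\in\L_\m\setminus\{\pm a\}$; these span $\spn(\L_\m)=\re^\m$, whence $\rank\Jac(\G,a)=\m$, the Zariski tangent space is zero-dimensional, and $a$ is simple. Then $\dim_\cx\cx[Y]/\I=\sum_{a\in\L_\m}\mult(a)=|\L_\m|=\dim_\cx\cx[Y]/\Rad(\I)$, forcing $\I=\Rad(\I)$, so $\I$ is radical.

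Parts (iii) and (iv) are then immediate. Since $\L_\m$ is a spherical $11$-design, \pref{tdes} shows any polynomial of $\cI(\L_\m)$ of degree at most $5$ is a multiple of $\Nm$, hence trivial; on the other hand $\G$ contains sliced zonals of degree $6$, which are products of six linear forms and so not divisible by the irreducible quadratic $\Nm$ — this gives (iii). And by (i), (ii) and the Nullstellensatz, $\I=\cI(\cZ(\I))=\cI(\L_\m)$, which is therefore generated by $\G$, all of whose members have degree at most $6$ — this gives (iv).
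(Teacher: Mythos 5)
Your proposal is correct and follows essentially the same route as the paper's proof: \cref{slicedzonal} for reality of the zeros, unimodularity of the Leech lattice to pin down $\cZ(\I)=\L_\m$, the gradient computation showing each sliced zonal based at $c\ne\pm a$ has gradient a nonzero multiple of $c$ at $a$ (hence rank-$\m$ Jacobian, simple zeros, radical ideal), and then the $11$-design property with \pref{tdes} plus the Nullstellensatz for (iii) and (iv). If anything, you are slightly more careful than the paper in handling the case $z\in\spn\{a\}$ and the $\sqrt{8}$-scaling in the duality step, both of which the paper's wording glosses over.
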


\begin{proof} Denote by    $\cZ(\I)$ the zero set of the ideal $\I$. First observe that $\L_\m \subseteq \cZ(\I)$ since
each vector $b\in \L_\m$ has squared length 32 and $S_{a,i}(b) = 0$ for all $a\in \L_\m$, $1\le i\le 23$, as explained in \sref{antip}.
Now suppose $p \in \cZ(\I)$.
Using \cref{slicedzonal}, we see that each $p$ has all real coordinates.
Since each sliced zonal polynomial $S_{a,i}$ vanishes at $p$, the inner product $p\cdot a$
is integral for every $a \in \L_{24}$ and, consquently, $p$ has integral inner product with
every Leech vector $b \in \Lambda_{24}$. It is well-known that $\Lambda_{24}$ is 
unimodular, so $p$ must belong to this lattice and, in view of its norm, belongs to 
$\L_{24}$, proving {\sl (i)}. 

For the proof of {\sl (ii)}, fix $a\in \L_\m$ and consider the Jacobian of the generating set $\G$ of $\I$, which has $1+23\cdot 98280$ rows and $24$ columns.
We must prove that this matrix has rank 24. To do so, we locate a $24\times 24$ invertible submatrix, which we will denote by $J(a)$.  To wit, consider
24 linearly independent elements $c^{(i)} \in X$, $1\le i\le \m$,   not including $\pm a$, and for each of the corresponding antipodal
pairs $\pm c^{(i)}$, abbreviate $c=c^{(i)}$ and select any sliced zonal polynomial
$S_{c,j}(Y)= (b^{(j)} \cdot Y)\prod_h (c\cdot Y - \omega_h)$ from $\G$ such that $b^{(j)} \cdot a \neq 0$. 
(This is possible since $b^{(j)} \cdot a  =0$ for all $1\le j\le 23$ implies that $a$ is a scalar multiple of 
$c$, a possibility we have explicitly excluded.) For the moment, denote these twenty-four polynomials by $F_1,\ldots, F_\m$.   We have 
$$ J(a) = \left[ \begin{array}{ccc}
\frac{ \partial F_1 }{ \partial Y_1} \big|_a  & \cdots  & \frac{ \partial F_1 }{ \partial Y_\m} \big|_a  \\
     \vdots   &   \ddots &   \vdots \\
\frac{ \partial F_\m }{ \partial Y_1} \big|_a & \cdots  & \frac{ \partial F_\m }{ \partial Y_\m}  \big|_a 
\end{array} \right] ~. $$
If $F_i(Y) = (b \cdot Y)  \prod_{h=1}^5 \left( c\cdot Y - \omega_h\right) $ for $c=c^{(i)}$ and some $b \perp c$,  $b \not\perp a$, then
$$ \frac{ \partial F_i }{ \partial Y_j}  = b_j  \prod_{h=1}^5 \left( c \cdot Y - \omega_h\right)  + c_j (b \cdot Y) \sum_{\ell = 1}^5  \prod_{h\neq \ell} \left( c\cdot Y - \omega_h\right)   $$
with 
$$ \frac{ \partial F_i }{ \partial Y_j} \bigg|_a = c_j (b \cdot a) \sum_{\ell = 1}^5  \prod_{h\neq \ell} \left( c\cdot a - \omega_h\right)  =
  c_j (b \cdot a)  \prod_{h\neq k} \left( c\cdot a - \omega_h\right)$$
  where $c\cdot a = \omega_k$. So the $i^{\rm th}$ row of $J(a)$  is simply
  $$ (b \cdot a)  \prod_{h\neq k} \left( c\cdot a - \omega_h\right) [c_1 \ c_2 \ \cdots \ c_\m]  ~.$$
Since the 24 scalars $(b \cdot a)  \prod_{h\neq k_i} \left( c^{(i)} \cdot a - \omega_h\right) $ are all non-zero, we have row equivalence
$$ J(a) \sim
\left[ \begin{array}{ccc}
\ \ \  &  c^{(1)} &  \ \ \ \\
   & \vdots  &    \\
\ \ \  &  c^{(24)} &  \ \ \ 
\end{array} \right] $$
which is, by design, an invertible $\m \times \m$ matrix.  It follows that the Zariski tangent space at $a$ (the common kernel of all differentials of all 
polynomials in the ideal) has dimension zero, so $a$ is indeed a simple zero of $\I$. 
Since all zeros are simple, it follows that $\I$ is a radical ideal. (See, e.g., Proposition 8 in Section 5.3 of \cite{iva}.)
This proves {\sl (ii)}.

We have exhibited non-trivial polynomials (the sliced zonals) of degree six and it is well-known that $\L_\m$ is a spherical 
11-design. So we have {\sl (iii)}. 
Finally, we have $\cI(\L_\m)  = \cI( \cZ(\I) ) = \Rad(\I) = \I $ by {\sl (i)}, Nullstellensatz, and {\sl (iii)}, respectively, giving {\sl (iv)}. \qed
\end{proof}

%%%%%%%%%%%%%%%%%%%%%%%%%%%%%%%%%%%%%%%%%%%%%%%%%%%%%%%%%
\section{Two new parameters}
\label{sec:twoparameters}

In this paper, we have considered several examples of point configurations on spheres and we have studied the ideal of 
polynomials which vanish on each of these finite algebraic sets. These examples were chosen because each corresponds to 
a cometric association scheme with extremal properties. It is no surprise then that, for these examples, the two parameters
we have proposed to study are equal. We now define these two parameters in general.

Let $\Omega_m$ denote the unit sphere in $\re^m$ and consider a finite non-empty set $X \subset \Omega_m$ (i.e., a
spherical code). For simplicity\footnote{In the case where $X$ is not full-dimensional, e.g., where $X$ is the 
set of columns of some primitive idempotent $E_1$ of an association scheme, our definition of ``trivial'' polynomial
must be modified to include combinations of linear functions vanishing on $X$.}, assume that $X$ spans $\re^m$.
The ideal $\cI(X)$ of polynomials in $m$ variables that vanish on $X$ contains the principal ideal $\langle \Nm \rangle$
of all multiples of the equation of the sphere. We define two parameters:
\begin{equation}
\label{eqn:gamma1}
\gamma_1 = \gamma_1(X) = \min \left\{ \deg F \bigm|  F \in \cI(X), \  \Nm \not| \  F \right\}
\end{equation}
and
\begin{equation}
\label{eqn:gamma2}
\gamma_2 = \gamma_2(X) = \min \left\{  \max_{F \in \G} \  \deg F \Bigm|  \langle \G \rangle = \cI(X)  \right\} ~ .
\end{equation}
Clearly $\gamma_1 \le \gamma_2$ and, in general, $\gamma_1 < \gamma_2$. For example, the points of $X$ may lie on two parallel
planes giving $\gamma_1=2$ yet $\gamma_2$ may still be made arbitrarily large. Yet, in all five spherical configurations considered above,
we have $\gamma_1=\gamma_2$. This also occurs for (natural Euclidean representations of) 
certain classical association schemes such as Hamming and Johnson schemes, where $\gamma_1=\gamma_2=2$ \cite{billsidealpaper}. 
We predict that the cometric association schemes 
for which $\gamma_1=\gamma_2$ are worthy of further study and perhaps can be classified.

\bigskip

\noindent {\bf Problem:} What can be said about $X$ when $\gamma_2(X)=2$? Can these sets be classified if we assume that $X$
comes from an association scheme?

\bigskip

For a more elementary class of examples, consider any set $X$ of  
$n$ points on the unit circle  $\Omega_2$. B\'{e}zout's Theorem implies that $\gamma_1 \ge \lceil n/2 \rceil$.
On the other hand, it is not hard to show $\gamma_2 =  \lceil n/2 \rceil$ as well.  Let us show this in the case 
where $n$ is even. Choose, in two different ways,  $n/2$ chords covering the $n$ points so that no chord from the second covering 
is parallel to any chord from the first. Let $F(Y)$ be a polynomial of degree $n/2$ which vanishes on the first set of chords and
let $G(Y)$ be a polynomial of degree $n/2$ which vanishes on the second set of chords.
 We obtain two polynomials each of which factors into $n/2$ linear factors. Clearly $\Nm$, $F$ and $G$ generate an ideal $\I$
with $\cZ(\I) \supseteq X$. But our choice of chords ensures that all zeroes of $\I$  are real and that each point of $X$ is a simple zero
of $\I$, ensuring $\langle \Nm, F, G \rangle = \cI(X)$. 

\bigskip

Both zonal polynomials and sliced zonal polynomials are non-trivial polynomials in our ideal. So we have the following

\begin{lemma}
\label{lem:gamma1}
For any spanning $X\subseteq \Omega_m$ with inner product set $\{ a\cdot b \mid a,b \in X\}$ of size $d+1$, $2 \le \gamma_1(X) \le d+1$. If
$X$ is antipodal, then $2\le \gamma_1(X) \le d$. \qed
\end{lemma}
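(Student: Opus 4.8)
The plan is to exhibit, in each case, an explicit non-trivial polynomial in $\cI(X)$ whose degree realizes the claimed upper bound, and to observe that the lower bound $\gamma_1 \ge 2$ is immediate. For the lower bound: any polynomial of degree $0$ or $1$ that vanishes on a spanning set $X$ must be identically zero. A nonzero constant never vanishes anywhere, and a degree-one polynomial $\ell(Y) = c_0 + \sum_i c_i Y_i$ vanishing on all of $X$ would force, after translating to the centroid and using that $X$ spans $\re^m$, all coefficients $c_i$ to vanish; since the zero polynomial is (vacuously) divisible by $\Nm$, it does not count as non-trivial. Hence $\gamma_1(X) \ge 2$.

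For the upper bounds, I would invoke the machinery already set up in \sref{prep}. Fix $a \in X$ and let $f(t) = \prod_{h=0}^d (t - \omega_h)$, where $\{\omega_0,\dots,\omega_d\}$ is the inner product set. The zonal polynomial $Z_{f,a}(Y) = f(a\cdot Y)$ lies in $\cI(X)$ and has total degree $d+1$ (the leading term in $a\cdot Y$ is $(a\cdot Y)^{d+1}$, which does not cancel). It remains to check that $Z_{f,a}$ is non-trivial, i.e., that $\Nm \nmid Z_{f,a}$; this follows because a multiple of $\Nm$ vanishes on the entire sphere $\Omega_m$, whereas $Z_{f,a}$ does not — for instance, a generic point with $a\cdot Y$ avoiding all the $\omega_h$ is a zero of $\Nm$'s multiples only if it lies off the sphere, so one picks a point on $\Omega_m$ where $a \cdot Y \notin \{\omega_0,\dots,\omega_d\}$, which exists as soon as $\Omega_m$ is not contained in $d+1$ parallel hyperplanes, i.e., as soon as $m \ge 2$ (and when $m = 1$ the statement is degenerate). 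This gives $\gamma_1(X) \le d+1$.

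When $X$ is antipodal, I would instead use the sliced zonal polynomials $S_{f,a,b}(Y) = (b\cdot Y)\prod_{h=1}^{d-1}(a\cdot Y - \omega_h)$ from \eqref{eqn:sliced-def}, where $b$ is any nonzero vector orthogonal to $a$; as noted in \sref{antip}, $S_{f,a,b} \in \cI(X)$ and it has degree $d$. Non-triviality is checked the same way: $S_{f,a,b}$ does not vanish on all of $\Omega_m$ (choose a sphere point with $b\cdot Y \ne 0$ and $a\cdot Y$ avoiding $\omega_1,\dots,\omega_{d-1}$, which is possible since $b \ne 0$ and the relevant excluded set is a union of finitely many hyperplane sections), so $\Nm \nmid S_{f,a,b}$ and $\gamma_1(X) \le d$. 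Combined with $\gamma_1 \ge 2$, both assertions follow.

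I do not anticipate a serious obstacle here; the one point requiring a little care is the non-triviality verification, i.e., ruling out the possibility that the exhibited zonal or sliced zonal is secretly a multiple of $\Nm$. The cleanest way to dispatch this is the degree/vanishing argument above: a nonzero multiple $\Nm \cdot H$ vanishes on the whole sphere, and the zonal (resp. sliced zonal) manifestly does not, since its zero set is a finite union of affine hyperplanes which cannot contain the sphere $\Omega_m$ once $m\ge 2$. (The degenerate case $m=1$, where the sphere is two points, can be excluded at the outset or handled trivially.)
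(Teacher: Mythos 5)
Your proposal is correct and follows essentially the same route as the paper, whose entire ``proof'' is the one-line observation preceding the lemma that zonal polynomials (degree $d+1$) and, in the antipodal case, sliced zonal polynomials (degree $d$) are non-trivial members of $\cI(X)$, with the lower bound $\gamma_1\ge 2$ left as an immediate consequence of the spanning hypothesis. The extra detail you supply --- that $\Nm\nmid Z_{f,a}$ (resp.\ $\Nm\nmid S_{f,a,b}$) because a product of affine-linear factors cannot vanish on all of $\Omega_m$ when $m\ge 2$ --- is exactly the implicit non-triviality check, so there is nothing substantively different to report.
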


\bigskip

As the polygons show, there can be no upper bound on $\gamma_2$ depending only on $m$. But in
the case where $X$ is the set of columns of the first idempotent $E_1$ of some cometric association 
scheme (in the space $\colsp E_1$ of dimension $m$), it follows from a
result of Martin and Williford \cite{mw} that there exists a finite upper bound on $\gamma_2(X)$, but no
explicit bound of this sort is known.

The vector space of polynomial functions on $\Omega_m$ of degree at most $k$ is studied in \cite{dgs}. 
The dimension of this space is denoted there by $R_k(1)$ and is equal to
$$ R_k(1) = \binom{ m + k - 1}{ m - 1 } +  \binom{ m + k - 2}{ m - 1 } ~ .$$
(see  \cite[Theorem~3.2]{dgs}). Elementary linear algebra then tells us that 
$$\gamma_1(X) \le \min \{ k \mid R_k(1) > |X| \} $$
and we have equality for $\L_\m$, $\E_8$ and $\E_6$.

\subsection{Cometric association schemes}
\label{sec:schemes}

In this final section, we outline some applications of our results and the tools introduced above to the theory of cometric 
($Q$-polynomial) association schemes. Basic background material on association schemes can be found in any of the following references:
\cite[p8]{del}, \cite[p52]{banito}, \cite[p43]{bcn}, \cite[p229]{godsil}.

Let $X$ be a finite set of size $v$ and let $\R = \{ R_0, \ldots, R_d\}$ be  a collection of $d+1$ binary relations on $X$.
The pair $(X,\R)$ is called a {\em symmetric $d$-class association scheme} provided the following four conditions hold:
\begin{itemize}
\item $R_0 = \{ (a,a) \mid a\in X\}$ is the identity relation on $X$;
\item the $R_i$ partition $X\times X$: $R_0 \cup \cdots \cup R_d = X\times X$ and $R_i \cap R_j = \emptyset$ whenever $i\neq j$;
\item there exist integers $p_{ij}^k$ ($0\le i,j,k \le d$) such that whenever $a,b\in X$ with $(a,b)\in R_k$,
$$ \left| \{ c \in X \ : \ (a,c)\in R_i , \ (c,b) \in R_j \} \right| = p_{ij}^k ~;$$
\item each $R_i$ is a symmetric relation: $R_i^\top = R_i$.
\end{itemize}
The elements of $X$ are called
{\em vertices}  and $R_i$ is called the {\em $i^{\rm th}$ adjacency} (or {\em basis}) {\em relation} of the scheme. In our discussion,
we have considered  pairs $(X,\R)$  arising from the following construction (and, in these five cases at least, 
we have obtained association schemes):

\bigskip

\noindent {\bf Construction 5.1:} Let $X$  be a finite subset of a sphere in $\re^m$ centered at the origin. Let
the inner product set of $X$ be written $\{ \omega_0,  \omega_1,\ldots, \omega_d \}$   where $a\cdot a = \omega_0$
for $a\in X$.  Partition  $X\times X$  into relations $R_i$ by inner product: $(a,b) \in R_h$ iff $a \cdot b = \omega_h$.

\bigskip

Each relation $R_h$ ($0\le h\le d$) of an association scheme gives us an undirected graph $(X,R_h)$ and we denote by 
$A_h$ the adjacency matrix of  this graph. It is well-known (see, e.g., \cite[Section~2.2]{bcn}) that the vector space 
$\BMA$ spanned by $\{A_0,\ldots, A_d\}$  is a commutative matrix algebra which is also closed under entrywise 
multiplication. This {\em Bose-Mesner algebra}  $\BMA$ admits a basis of 
mutually orthogonal idempotents $\{ E_0, E_1, \ldots, E_d\}$, these being the orthogonal projections onto the maximal common
eigenspaces of the matrices $A_h$. The Schur (or entrywise) product of any two of these idempotents belongs to $\BMA$ so there
exist scalars $q_{ij}^k$ ($0\le i,j,k\le d$), called {\em Krein parameters}, satisfying 
\begin{equation}
\label{eqn:qijk}
E_i \circ E_j = \frac{1}{|X|} \sum_{k=0}^d q_{ij}^k E_k ~. 
\end{equation}
An association scheme $(X,\R)$ is {\em cometric}  \cite[p58]{bcn} (or $Q$-{\em polynomial}) if there exists an ordering $E_0,\ldots, E_d$ of
its primitive idempotents such that the equations (\ref{eqn:qijk}) with respect to this ordering satisfy
\begin{itemize}
\item $q_{ij}^k = 0$ whenever $k > i+j$, and
\item $q_{ij}^k \neq 0$ whenever $k=i+j$.
\end{itemize}
In this case, the ordering $E_0,\ldots, E_d$ is called  a $Q$-{\em polynomial ordering}. (A given cometric association scheme 
may admit several such orderings.) The five examples considered in Sections \ref{sec:e8}, \ref{sec:leech} and  \ref{sec:icos}
all correspond to cometric association schemes.

\bigskip

Suppose $(X,\R)$ is any cometric (symmetric) association scheme with $Q$-polynomial ordering $E_0,E_1,\ldots,E_d$.
The column space of $E_j$ ($0\le j\le d$) is called 
the $j^{\rm th}$ {\em eigenspace} of the scheme and is denoted by $V_j$. We set $m_j = \rank E_j$.
One quickly sees that Equation  (\ref{eqn:qijk})  implies that  $E_k (E_i \circ E_j)=0$ whenever $q_{ij}^k=0$;
this identity extends nicely to entrywise products of arbitrary eigenvectors
due to the following theorem of 
Cameron, Goethals, and Seidel.  (See also Proposition II.8.3(i)
in~\cite{banito} and Theorem 2.3.2 in \cite{bcn}.)

\begin{theorem}[{\cite[Theorem~5.1]{cgs}}]
\label{thm:fund}
If $u \in V_i $ and $w \in V_j$ and $q_{ij}^{\ell} = 0$, then
$u \circ w$ is orthogonal to $V_\ell$  where $u \circ w$ denotes the entrywise product
of vectors $u$ and $w$.
\end{theorem}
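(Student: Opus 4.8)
The plan is to deduce the statement about entrywise products of \emph{vectors} from the defining identity (\ref{eqn:qijk}) for entrywise products of the idempotent \emph{matrices} $E_i$, the bridge being the elementary identity $(ab^\top)\circ(cd^\top)=(a\circ c)(b\circ d)^\top$, valid for any column vectors $a,b,c,d$ of matching length.

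First I would fix an orthonormal basis $\{f_1,\ldots,f_{m_i}\}$ of $V_i$ and an orthonormal basis $\{g_1,\ldots,g_{m_j}\}$ of $V_j$, so that, as orthogonal projections, $E_i=\sum_s f_sf_s^\top$ and $E_j=\sum_t g_tg_t^\top$. Applying the identity above termwise,
$$ E_i\circ E_j=\sum_{s,t}(f_sf_s^\top)\circ(g_tg_t^\top)=\sum_{s,t}(f_s\circ g_t)(f_s\circ g_t)^\top, $$
a sum of rank-one positive semidefinite matrices. Since the column space of a sum of positive semidefinite matrices equals the sum of their column spaces (their common kernel being the intersection of the individual kernels), it follows that $\colsp(E_i\circ E_j)=\spn\{\,f_s\circ g_t\mid 1\le s\le m_i,\ 1\le t\le m_j\,\}$.

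Next I would invoke (\ref{eqn:qijk}): since $E_i\circ E_j=\frac{1}{|X|}\sum_k q_{ij}^kE_k$ and the $E_k$ are mutually orthogonal projections onto the eigenspaces $V_k$, the column space of $E_i\circ E_j$ is exactly $\bigoplus_{k\,:\,q_{ij}^k\neq 0}V_k$. Comparing this with the description from the previous paragraph shows that every vector $f_s\circ g_t$ lies in $\bigoplus_{k\,:\,q_{ij}^k\neq 0}V_k$. When $q_{ij}^\ell=0$, the eigenspace $V_\ell$ is not among these summands, and since the $V_k$ are pairwise orthogonal, we conclude $f_s\circ g_t\perp V_\ell$ for all $s,t$. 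Finally, writing $u=\sum_s\alpha_sf_s\in V_i$ and $w=\sum_t\beta_tg_t\in V_j$ and using bilinearity of the entrywise product, $u\circ w=\sum_{s,t}\alpha_s\beta_t\,(f_s\circ g_t)$ is a linear combination of vectors each orthogonal to $V_\ell$, whence $u\circ w\perp V_\ell$.

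I do not anticipate a genuine obstacle: all of the arithmetic content resides in (\ref{eqn:qijk}), itself a consequence of the closure of the Bose--Mesner algebra $\BMA$ under Schur multiplication, and the only step requiring a moment's care is the passage from that matrix identity to a statement about individual vectors. The rank-one expansion of the projections $E_i$ and $E_j$ is exactly the device that effects this passage, since it exhibits $\colsp(E_i\circ E_j)$ concretely as the span of the Hadamard products $f_s\circ g_t$ of basis vectors, from which the orthogonality $u\circ w\perp V_\ell$ reads off immediately.
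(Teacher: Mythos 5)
Your proof is correct. Note that the paper does not actually prove this statement---it is quoted from Cameron--Goethals--Seidel \cite{cgs}, the only in-text hint being the preceding remark that Equation~(\ref{eqn:qijk}) gives $E_\ell (E_i \circ E_j) = 0$ whenever $q_{ij}^{\ell} = 0$. Your argument supplies precisely the bridge from that matrix identity to the vector statement, and every step checks out: since the scheme is symmetric, each $E_k$ is a real symmetric projection, so the rank-one expansions $E_i=\sum_s f_sf_s^\top$, $E_j=\sum_t g_tg_t^\top$ are legitimate; the identity $(ab^\top)\circ(cd^\top)=(a\circ c)(b\circ d)^\top$ then exhibits $E_i\circ E_j$ as $\sum_{s,t}(f_s\circ g_t)(f_s\circ g_t)^\top$, a sum of positive semidefinite rank-one matrices, so $\colsp(E_i\circ E_j)=\spn\{f_s\circ g_t\}$, which by bilinearity contains every $u\circ w$ with $u\in V_i$, $w\in V_j$; and since the $E_k$ are mutually orthogonal idempotents summing to $I$, the right side of (\ref{eqn:qijk}) has column space $\bigoplus_{k:\,q_{ij}^k\neq 0}V_k$, which is orthogonal to $V_\ell$ when $q_{ij}^{\ell}=0$ (no positivity of the Krein parameters is needed, only that they are scalars). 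This is essentially the classical argument behind the cited result, so your write-up can stand as a self-contained proof of the theorem.
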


Now with $m=\rank E_1$, choose any orthonormal basis for $V_1$,  arranging these vectors
in a matrix $U$ with $v$ rows and $m$ columns. Then we have $E_1 = U U^\top$ and we may consider the
spherical configuration $\bar{X}$  consisting of the rows of $U$. (It is well-known (e.g., \cite{mw}, but this was known
much earlier) that the association scheme $(X,\R)$ may be recovered from $\bar{X}$ via Construction 5.1.)

Every polynomial in $\cx[Y_1,\ldots,Y_m]$ which belongs to $\cI(\bar{X})$ gives us a polynomial
in $v$ variables --- $Z_1,\ldots,Z_v$, say --- which vanishes on each column of $E_1$; this is achieved by the simple linear 
change of variables $Y_h = \sum_a U_{ah} Z_a$. For example, if the cubic
$$ F(Y) = (Y \cdot \ell_1  - \alpha_1)  (Y \cdot \ell_2  - \alpha_2)  (Y \cdot \ell_3  - \alpha_3) $$
vanishes on each row of $U$, then we have 
$$ \left( U \ell_1 - \alpha_1\one \right) \circ  \left( U \ell_2 - \alpha_2\one \right)  \circ \left( U \ell_3 - \alpha_3\one \right) = 0$$
in $\cx^v$ where $\one$ denotes the vector of all ones. But $\rank(U)=m$, so each vector $\ell_i$ of length $m$ may be expressed $\ell_i = U^\top w_i$ for some
vector $w_i$ of length $v$. Thus we have
$$ \left( E_1 w_1 - \alpha_1\one \right) \circ  \left( E_1 w_2 - \alpha_2\one \right)  \circ \left(  E_1 w_3 - \alpha_3\one \right) = 0$$
and this gives us a cubic polynomial which vanishes on each row of $E_1$. If we write $u_i = E_1 w_i -\alpha_i \one$,
then we have three vectors $u_1, u_2, u_3$ in $V_0+ V_1$ whose entrywise product is zero. 

For example, up to scalar, we may take our icosahedron to consist of the twelve points
$$ (\pm1, \pm \varphi, 0), \quad (0, \pm1, \pm \varphi), \quad (\pm \varphi, 0, \pm1) $$
where $\varphi = \frac{1}{2}(1+\sqrt{5})$ and the sliced zonal polynomial 
$$ (Y_1 + \varphi Y_2 - \varphi) (Y_1 + \varphi Y_2  + \varphi) Y_3 $$
lifts (under any ordering where $(1,\varphi,0),(0,1,\varphi),(\varphi,0,1)$ are the first, second and third vertices, respectively) to
$$ \frac{1}{2}[ Z_1 - \varphi][Z_1+\varphi]\left[ (1-\varphi)Z_1 + Z_2 + (2-\varphi)Z_3 \right] $$
which vanishes on each row of $\sigma E_1$ for $\sigma=10+2\sqrt{5}$. In particular, if we let 
$u \in \re^{12}$ with $u_b=(E_1)_{b,1}-\varphi$ and
$w \in \re^{12}$ with $w_b=(E_1)_{b,1}+\varphi$, then the entrywise product $u \circ w$ has only two nonzero entries and
belongs to $V_0+V_1+V_2$ by \tref{fund}.
\bigskip

In \cite{billsidealpaper}, we investigate this sort of structure further and discuss applications. For example, we exhibit connections
between the ideal $\cI(X)$ and completely regular codes in cometric distance-regular graphs, we consider the Hilbert
series of this ideal and its relationship to a conjecture of Bannai and Ito, and we explore the role of this ideal in duality
of association schemes. 

Rather than giving full details here, we are content to 
demonstrate these ideas by  considering one more, quite simple, family of examples in detail: the complete bipartite graphs $K_{n,n}$.
These are obviously strongly regular graphs and so give us 2-class cometric association schemes.

%\begin{example} 
Let $X$ be a spherical code of $2n$ points in $\re^{2n-2}$ with angles 
$0$, $\pi/2$ and $\arccos(\frac{-1}{n-1})$ consisting  of  $n$-simplices arranged in a pair of orthogonal 
$(n-1)$-dimesional subspaces. Then the three angles give us the distance relations  $\{R_0,R_1,R_2\}$ of 
the association scheme of the complete bipartite graph. Our eigenspace dimensions are
$m_0=1$, $m=m_1=2n-2$ and $m_2=1$. Scaled by $\sqrt{m/n}$ for convenience, our spherical drawing may be written
$$ a^{(i)} = \left( \underbrace{0,\ldots,0}_{i-2}, \frac{ 1-i }{ \sqrt{ \binom{i}{2} } } ,  \frac{ 1}{ \sqrt{ \binom{i+1}{2} } } ,  \frac{ 1}{ \sqrt{ \binom{i+2}{2} } } , 
\ldots,  \frac{ 1}{ \sqrt{ \binom{n}{2} } } , \underbrace{0,\ldots, 0}_{n} \right) $$
$$ b^{(i)} = \left(  \underbrace{0,\ldots, 0}_{n} , \underbrace{0,\ldots,0}_{i-2}, \frac{ 1-i }{ \sqrt{ \binom{i}{2} } } ,  \frac{ 1}{ \sqrt{ \binom{i+1}{2} } } ,  \frac{ 1}{ \sqrt{ \binom{i+2}{2} } } , 
\ldots,  \frac{ 1}{ \sqrt{ \binom{n}{2} } } \right) $$
($1\le i \le n$, and ignoring some terms for $i=1,2$) with inner products
$$ \omega_0 = a^{(i)} \cdot a^{(i)} = 2-\frac{2}{n}, \quad \omega_1 =  a^{(i)} \cdot b^{(j)} = 0 \ \ (\text{any} \ i,j), \qquad  a^{(i)} \cdot a^{(j)} = -\frac{2}{n} \ \ (i\neq j) $$
(and the same with $a$ and $b$ symbols swapped).

Every edge of $K_{n,n}$ is a completely regular code \cite[p346]{bcn}, giving us an equitable partition with quotient matrix 
$\left[ \begin{array}{cc} 1 & n-1 \\ 1 & n-1 \end{array} \right]$ having eigenvalues $n$ and $0$.  Each such edge $\{a^{(i)},b^{(j)} \}$
provides us with a quadratic polynomial 
$$ F_{i,j}(Y) = \left( a^{(i)} \cdot Y + b^{(j)} \cdot Y - \left(2-\frac{2}{n} \right) \right)\left( a^{(i)} \cdot Y + b^{(j)} \cdot Y + \frac{2}{n} \right)$$
in our ideal and an easy induction proof shows that these $n^2$ quadratic polynomials $\{ F_{i,j} \mid 1 \le i,j \le n\}$ generate our ideal
$\cI(X)$.

In this representation of $X$, we may easily describe the ideal of leading terms for  $K_{n,n}$, i.e., the polynomial ideal generated by
all leading terms of polynomials in $\cI(X)$ with respect to some total ordering. If we choose {\it grevlex}  as our monomial 
ordering (see, e.g., \cite[p58]{iva},\cite[p331]{df}), 
 then every degree two monomial {\em except} $Y_{m}^2$
belongs to the ideal of leading terms {\sc lt}$( \cI(X) )$ as does $Y_m^3$. So the only monomials \underline{not} belonging to the ideal 
{\sc lt}$( \cI(X) )$ are 
$$1, \ \ \ Y_1, \ Y_2, \ \ldots, \ Y_m, \ \ \ Y_m^2 $$
and the number of such monomials of degree $j$ is $\rank E_j$, as expected; our Hilbert series (with respect to any total degree ordering)
is then $1 + (2n-2)t + t^2$.

\bigskip

Finally, we can also find generators for our ideal using the dual association scheme. For this, we employ a different, but equivalent drawing.

It is well-known that $K_{n,n}$ is a translation association scheme for the group $\ints_{2n}$: vertices may be labeled by 
$0,1,\ldots, 2n-1$ with two vertices adjacent if and only if their difference modulo $2n$ is odd.
So let $\zeta$ be a complex primitive $2n^{\rm th}$ root of unity and let $\hat{U}$ be the matrix with rows and columns indexed 
by $G= \ints_{2n}$ with $(a,b)$-entry $\hat{U}_{a,b} = \zeta^{ab}$ where exponents are computed modulo $2n$. Each column 
of $\hat{U}$ is a character of $G$ and these are well-known to provide a basis of eigenvectors for the Bose-Mesner algebra
$\BMA = \langle A \rangle$  of the translation scheme $K_{n,n}$ (here, $A$ is the adjacency matrix of this graph). 

Let $U$ be the submatrix obtained by deleting the columns indexed by $0$ and $n$; mapping 
the vertex set $G$ to $\cx^{2n-2}$ by sending vertex $a$ to row $a$ of this matrix $U$ gives us a spherical configuration,
which we will also call $X$, of
$2n$ points on a sphere of radius $\sqrt{2n-2}$ with Hermitian inner products $2n-2$ ($a=b$), $0$ ($a-b$ odd) and $-2$ ($a-b$ even).
For consistency, we label the columns of $U$ by $C=\{1,\ldots, n-1, n+1,\ldots, 2n-1\}$ and work with indeterminates $Y_i$ ($1\le i < 2n$, $i\neq n$).
 
Let $\chi_i$ denote the $i^{\rm th}$ column of $\hat{U}$; then $A \chi_i = \pm n \chi_i$ for $i=0,n$ and $A\chi_i = 0 \chi_i$ for $i\in C$. So the
eigenspaces of the association scheme are simply  $V_0 = \spn \{ \chi_0 \}$, $V_2 = \spn \{ \chi_n \}$, and 
$$ V_1 = \spn \{ \chi_i \mid i \neq 0,n \} .$$

The dual association scheme  \cite[Section~2.6]{del} 
has the group $\ints_{2n}^*$ of characters as vertices and relations
$$ R_j^* = \left\{  (\phi,\psi) \mid \psi = \phi \circ \chi \ \text{for some character} \ \chi \in V_j \right\}. $$
So, in our case with $d=2$, $m_1=2n-2$ and $m_2=1$, the dual association scheme is the one arising from a complete
multipartite graph with $n$ parts of size two, $K_{2,\ldots, 2}$.  We now find a  generating set for the ideal $\cI(X)$ 
from the  adjacency relation of this graph:
$R_1^* = \left\{ (\phi, \phi\circ \chi_j) \mid \phi \in \ints_{2n}^*, \ j\neq 0,n \right\}$.
First note that each $\chi_j$ ($1\le j < n$) gives us a quadratic polynomial in our ideal since the group equation $\chi_i \circ \chi_j = \chi_{i+j}$
gives us   $\chi_j \circ \chi_{2n-j} =\chi_0= \one$; that is,  $Y_j Y_{2n-j} - 1 \in \cI(X)$. We next use these quadratics to reduce polynomials
coming from the closed walks in the graph   $K_{2,\ldots, 2}$.

Each cycle $\psi_1,\psi_2,\ldots, \psi_k, \psi_1$ of length $k$ in the graph  $K_{2,\ldots, 2}$ gives rise to a polynomial of degree $k$ 
in the ideal $\cI(X)$. For example, if $\psi_1,\psi_2,\psi_3,\psi_1$ is any cycle of length three in $K_{2,\ldots, 2}$, 
then there must exist characters $\chi_{i_1},\chi_{i_2},\chi_{i_3} \in V_1$ with 
$$  \psi_2 = \psi_1 \circ \chi_{i_1}, \quad   \psi_3 = \psi_2 \circ \chi_{i_2}, \quad   \psi_1 = \psi_3 \circ \chi_{i_3}, $$
so that 
$$  \psi_1 = \psi_1 \circ \chi_{i_1} \circ \chi_{i_2} \circ \chi_{i_3} $$
and $ \chi_{i_1} \circ \chi_{i_2} \circ \chi_{i_3}$ is equal to the all-ones vector since $\psi_1$ has no zero entries. 
So the entrywise product of three columns of $U$ is equal to the vector of all ones. This gives us the polynomial
$$ Y_{i_1} Y_{i_2} Y_{i_3} - 1 \in \cI(X) .$$
Since $Y_{i_1} Y_{2n-i_1} \equiv 1$ modulo $\cI(X)$, we also infer that the quadratic $Y_{i_2} Y_{i_3} - Y_{2n-i_1}$ belongs to $\cI(X)$ 
as well. It is not hard to see that every directed cycle in $K_{2,\ldots,2}$ is an integer sum of 3-cycles. So the ideal generated by these 
cubics (or quadratics) corresponding to the 3-cycles
contains the polynomials determined in this way by all cycles in $K_{2,\ldots,2}$. For example, provided $j-i \neq 0,n$, the quadrilateral
$\chi_0, \chi_i, \chi_n, \chi_j, \chi_0$ in  $K_{2,\ldots, 2}$ gives us the degree four polynomial
$$ G(Y) = Y_{2n-j} Y_{j-n} Y_{n-i} Y_i - 1 \in \cI(X).$$
But this decomposes into two triangles with corresponding cubics $F_1(Y) = Y_{2n-j}  Y_{j-i} Y_i - 1$ and 
$F_2(Y) = Y_{i-j} Y_{j-n} Y_{n-i} - 1$ in the ideal, where (modulo $Y_{i-j}Y_{j-i} - 1$) we have
$$ G(Y) = F_1(Y) F_2(Y)  +  F_1(Y) +  F_2(Y) ~. $$
It is then not hard to show that the quadratics $Y_i Y_j - Y_{i+j}$ also form a generating set for our ideal (any vector satisfying 
these relations must arise from a character), when $X$ is 
represented in these coordinates.
%\end{example}

%%%%%%%%%%%%%%%%%%%%%%%%%%%%%%%%%%%%%%%%%%%%%%%%%%%%%%%%%
\section*{Acknowledgments}

This work was supported by the National Security Agency. Summer support for both authors is
gratefully acknowledged. We also thank Jonathan Godbout for useful suggestions. 
The first author benefited from discussions with Eiichi Bannai, Shuhong Gao, Chris Godsil, Felix Lazebnik, 
John Little, Akihiro Munemasa, Frank Sottile, Hajime Tanaka, Paul Terwilliger and Jason Williford over 
several years. The final preparation of this paper occurred while the first author was visiting the University 
of Delaware and he thanks the Department of Mathematical Sciences for their hospitality.
Finally, the authors are grateful to the referee for a careful reading of the manuscript and for suggesting several improvements.

\end{document}